%
%
%
%
\documentclass{amsart}

\usepackage{graphicx} 
\usepackage{pstricks-add}
\usepackage{pstricks,pst-node,pst-coil,pst-plot,pst-text}
\usepackage{pstricks}
\usepackage[all,cmtip]{xy}
\usepackage{color}
\allowdisplaybreaks

\newtheorem{theorem}{Theorem}[section]
\newtheorem{lemma}[theorem]{Lemma}

\theoremstyle{definition}

\newtheorem{example}[theorem]{Example}

\theoremstyle{remark}
\newtheorem{remark}[theorem]{Remark}

\numberwithin{equation}{section}


\newcommand{\eps}{\epsilon} 
\newcommand{\R}{{\mathbb{R}}} 
\newcommand{\N}{{\mathbb{N}}} 
\newcommand{\ds}{\displaystyle} 

\begin{document}

\title{Small- and large-amplitude limit cycles in Kukles systems with algebraic invariant curves}

\author{Jos\'e Mujica}
\address{Department of  Mathematics, Vrije Universiteit Amsterdam, The Netherlands}
\email{j.p.mujica@vu.nl}


\subjclass[2000]{Primary 34C07, 34C05; Secondary 34C25}
\date{January 2022.}

\dedicatory{}

\keywords{Limit cycles, Kukles systems, Lyapunov quantities, Melnikov functions, Invariant algebraic curves}

\begin{abstract}
Limit cycles of planar polynomial vector fields have been an active area of research for decades; the interest in periodic-orbit related dynamics comes from Hilbert's 16th problem and the fact that oscillatory states are often found in applications. We study the existence of limit cycles and their coexistence with invariant algebraic curves in two families of Kukles systems, via Lyapunov quantities and Melnikov functions of first and second order. We show center conditions, as well as a connection between small- and large-amplitude limit cycles arising in one of the families, in which the first coefficients of the Melnikov function correspond to the first Lyapunov quantities. We also provide an example of a planar polynomial system in which the cyclicity is not fully controlled by the first nonzero Melnikov function.   
\end{abstract}

\maketitle

\pagestyle{myheadings}
\thispagestyle{plain}
\markboth{ J. Mujica}
{Small- and large-amplitude limit cycles in Kukles systems}

\section{Introduction}
We consider planar systems of the form 
\begin{equation}\label{eq:Ku}
\begin{array}{rcl}
\dot x &=&-y\\[0,3cm]
\dot y &=&f(x,y),
\end{array}
\end{equation}
where $f(x,y)$ is a polynomial function with real coefficients and not divisible by~$y$. Systems like  \eqref{eq:Ku} are known as \emph{Kukles} systems, named after \emph{Isaak Solomonovich Kukles} (1905-1977), who showed necessary and sufficient center conditions for a cubic system of the form 
\begin{equation}\label{eq:Ku2}
\left\{
\begin{array}{rcl}
\dot x &=&-y,\\[0,3cm]
\dot y &=&x+a_1x^2+a_2xy+a_3y^2+a_4x^3+a_5x^2y+a_6xy^2+a_7y^3,
\end{array}
\right.
\end{equation}
where $a_1,\ldots , a_7$ are real coefficients. Finding necessary and sufficient center conditions for a planar system means to find conditions on the parameters so that the system has an equilibrium point (usually the origin) with a Jacobian matrix having complex eigenvalues with zero real part and all the orbits in a neighborhood of the equilibrium point are topologically equivalent to a circle. Finding such conditions, specially the second one, is far from being straight since a center is a codimension-infinity phenomenom that depends highly on the system's nature. The conditions given by Kukles for system \eqref{eq:Ku} were:
$$
\begin{array}{rcl}
k_{\alpha}=k_{\beta}=k_{\gamma}=a_7=0&&(K1),\\[0,3cm]
a_7=a_2=a_5=0&&(K2),\\[0,3cm]
a_7=a_5=a_3=a_1=0&&(K3),\\[0,3cm]
k_{\alpha}=k_{\beta}=k_{\gamma}=k_{\delta}=0&&(K4),
\end{array}
$$
with
$$
\begin{array}{rcl}
k_{\alpha}&=&a_4a_2^2+a_5\lambda\\[0,3cm]
k_{\beta}&=&(3a_7\lambda+\lambda^2+a_6a_2^2)a_5-3a_7\lambda^2-a_6a_2^2\lambda\\[0,3cm]
k_{\gamma}&=&\lambda+a_1a_2+a_5\\[0,3cm]
k_{\delta}&=&9a_6a_2^2+2a_4^2+9\lambda^2+27a_7\lambda
\end{array}
$$
and
$$
\lambda=a_2a_3+3a_7.
$$
Conditions $(K1)$--$(K4)$ are known as Kukles conditions \cite{Kukles}. 

Since Kukles' work  more researchers became interested in Kukles systems, due to its connection with Hilbert's 16th problem and the fact that Kukles systems are often found in applications via Li\'enard equations; they can model several oscillatory phenomena \cite{Car,Kukles}. In particular, these systems appear in models for heart rythm and breathing patterns \cite{VdP, VdP-VdM}, neuronal activity \cite{FH,Li2}, chemical reactions \cite{Field}, microtube kinetics and self organization on cellular level \cite{Glade}, as well as dynamics of populations of neurons, genetic models and biological regulation \cite{Thomas,Wilson}.  

Using computational techniques, Jin y Wang \cite{Jin-Wang} computed the focal quantities of \eqref{eq:Ku2} and noticed that there was a condition missed by Kukles, which provided a weak focus of much higher order than expected for these systems. Such conditions were
$$
a_2=a_6=0; \ a_3=-2a_1;\ a_5=-3a_7;\ a_7^2=a_4^2;\ -a_4=\dfrac {a_1^2}{3}.
$$
In \cite{Chri}, Cristopher and Lloyd proved that Jin and Wang's conditiones were indeed center conditions, claiming that Kukles conditions where incomplete. They classify all center conditions when $a_7=0$, verifying the case completely studied by Kukles. They also proved that for $a_7=0$ the maximum order of a weak focus is five. In~\cite{Chri2} the case $a_7\ne 0$ is studied, together with the center conditions not considered previously by Kukles, Jin and Wang.

As a remark, it is known that there exists a one-to-one correspondence between Kukles systems and second-order ordinary differential equations of the form $$\ddot x +f(x,\dot x)=0,$$ where $f(x,y)$ is a polinomial function. In addition, all singularities of a Kukles system lie on the $x$-axis.

\subsection{Limit cycles in Kukles systems}
One of the most relevant questions in planar polynomial systems and, in particular for Kukles systems, is about the existence and number of limit cycles, together to their stability and configuration on phase plane. A limit cycle is an isolated periodic orbit, and it is one of the responsibles for the organization of oscillatory patterns. The problem of finding limit cycles also takes special relevance due to its connection with Hilbert's 16th problem \cite{Chris-Lloyd, Du-El-Ro, Du-Ro-Ro, Hil, Ily, Il-Ya, Rous}.

In order to obtain limit cycles in \eqref{eq:Ku}, it necessary that $f(x,y)$ is not divisible by $y$ in \eqref{eq:Ku}. If $y$ divides $f(x,y)$, we can write \eqref{eq:Ku} as

\begin{equation}\label{eq:Ku3}
\left\{
\begin{array}{rcl}
\dot x &=&-y\\[0,3cm]
\dot y &=&yg(x,y),
\end{array}
\right.
\end{equation}
where $g(x,y)$ is a polynomial function with real coefficients. Here, all points on the straight line $y=0$ are singularities. It is known that any limit cycle of \eqref{eq:Ku3} must enclose a singularity, and since all singularities lie on the $x$-axis, any limit cycle must cross the $x$-axis and contain a singularity. This contradicts the uniqueness of solutions for system \eqref{eq:Ku3}. Therefore, system \eqref{eq:Ku3} does not have limit cycles. In what follows, we consider \eqref{eq:Ku} with $f(x,y)$ not divisible by $y$.
\bigskip

The number and configuration of limit cycles for Kukles systems have been widely studied. In \cite{Sad}, Sadovskii proved via Hopf bifurcations that \eqref{eq:Ku2} has at least 7 small-amplitude limit cycles that bifurcate from a weak focus. In \cite{Gaiko}, the authors showed that the number of limit cycles in \eqref{eq:Ku2} is at least 3, while in \cite{Ye-Ye} it is shown that \eqref{eq:Ku2} has at least 6 limit cycles enclosing the origin.  In \cite{Zang-Tian-Tade} the authors study a reduced Kukles system with a cubic perturbation:

$$
\left\{
\begin{array}{rcl}
\dot x &=& -y\\[0,3cm]
\dot y &=& x-\dfrac{a+1}{a}x^2+\dfrac{2-a}{1-a}y^2+\dfrac{1}{a}x^3+\eps g(x,y)
\end{array}
\right.
$$

with $a>2$, $\eps>0$ and $g$  given by:

$$
g(x,y)=b_{01}y+b_{11}xy+b_{02}y^2+b_{12}xy^2+b_{21}x^2y+b_{03}y^3
$$

and $a_{ij}, b_{ij} \in \mathbb R$.\\

The authors obtained 3 different configurations for 5 limit cycles in the system, with 2 of them having global (large-amplitude) limit cycles. In \cite{Cha-Sa-Sza-Grau},
 the authors study all possible configurations of invariant straight lines in Kukles systems, together with providing bounds for the number of limit cycles. They also 
give necessary conditions for the existence of an invariant algebraic curve of degree greater or equal to 2 that coexists with limit cycles. S\'aez and  Sz\'ant\'o 
\cite{Sa-Sz3} found a family of Kukles systems of arbitrary degree having an invariant non-degenerate ellipse $h(x,y)=0$:
$$
\left\{
\begin{array}{rcl}
\dot x &=& -y\\[0,3cm]
\dot y &=& -dc+bx+q_{n-2}(x,y)f(x,y)
\end{array}
\right.
$$

with $h(x,y)=d-2dcx+bx^2+y^2,\ b>0,\ d>0$ and for $0\le e <1,\ a>0$,

$$
\begin{array}{rcl}
b&=&1-e^2\\[0,3cm]
c&=&-\dfrac{1}{a}\\[0,3cm]
d&=&-e^2a^2
\end{array}
$$

and

$$
q_{n-2}(x,y)=-\dfrac{1}{a}+\ds\sum_{i=1}^{n-2}q_{i0}x^i+q_{0i}y^i.
$$
One can rewrite $h(x,y)=x^2+y^2-e^2(x+a)^2$, where $e$ corresponds to the excentricity of the ellipse.
 They proved that the ellipse is an invariant algebraic limit
 cycle. They also found bounds for the number of limit cycles and showed that the coefficients of the first Melnikov function coincide with the Lyapunov quantities. There are also studies on modified Kukles systems, e.g. generalized Kukles systems \cite{Lli-Mereu} and extended Kukles systems \cite{Hill-Lloyd-Pear}, to name a few.\\
 
In this paper, we study the existence of limit cycles for systems of the form~\eqref{eq:Ku}, and their coexistence with invariant algebraic curves. Recall that an algebraic curve $C$ is invariant with respect to a system such as \eqref{eq:Ku} if there exists a polynomial $K$ such that $\dot C = CK$; here, $\dot C$ denotes the rate of change along the orbits. We consider two leading examples: we first consider a family of Kukles systems of degree four having an invariant circle. We show center conditions and also prove the existence and uniqueness of a limit cycle via Lyapunov quantities and Melnikov functions, it turns out that the unique limit cycle is the invariant circle; the novelty here is the use Melnikov functions of second order. Secondly, we consider a family of Kukles systems of arbitrary odd degree with an invariant circle. We show an upper bound for the number of limit cycles of large amplitude and provide a concrete example when there is a connection between the small- and large-amplitude limit cycles via a relation between the Lyapunov quantities and the coefficients of the first Melnikov function. Here the total number of limit cycles is not totally controled by the Melnikov function and there is an interplay between limit cycles bifurcating from a weak focus and a center.
\\

This paper is organized a follows. Sections \ref{sec:center} and \ref{sec:Melnikov} provide a review of the techniques and methods used in this work. Even though they are not extensive and far from being a complete survey of methods for studying limit cycles in planar poltnomial systems, they intend to be a help for non-expert readers, so that they can go through the rest of the paper. In particular, section \ref{sec:center} introduces Lyapunov quantities and show how they can be used for studying limit cycles that bifurcate form a weak focus. Also, section \ref{sec:Melnikov} describes the situation when the vector field is a perturbation of a Hamitonian system and introduce Melnikov functions to study limit cycles that bifurcate from a center; these are the main tools used throughout this paper. We provide the corresponding references for the results exposed there. Sections \ref{sec:kukcir} and \ref{sec:odd} present the main results. In section \ref{sec:kukcir} we study the coexistence of limit cycles with an invariant circle in a Kukles family of degree four. Finally, in  section \ref{sec:odd} we study a more general case, when limit cycles coexist with an invariant circle in a Kukles family of arbitrary odd degree.

\section{Center conditions and limit cycles}
\label{sec:center}
By using Lyapunov functions \cite{Hahn,Wig} one can determine the stability of a singularity in a planar system. In this section, we will see that the good choice of a Lyapunov function also gives information on the number of limit cycles in planar polynomial systems and can be used to provide center conditions.

In what follows we consider a planar polynomial vector field $X$ with an isolated singularity at the origin. We assume that that the origin is a center-focus of $X$, that is, the Jacobian matrix at the origin $DX(0,0)$ has eigenvalues with zero real part. A normal form for such system is given by
\begin{equation}\label{eq:FN}
X=\left\{
\begin{array}{llll}
\dot{x} & = & P(x,y) = \lambda x-y+p(x,y) \\
 & \\

\dot{y} & = & Q(x,y) = x+\lambda y+q(x,y)
\end{array}
\right.
\end{equation}

where $p,q$ are polynomials without linear terms and $\lambda \in
\R$.\\

For the origin to be a center of \eqref{eq:FN}, it is necessary that $\lambda=0$.  If $\lambda=0$ and the origin is not a center,
we call it a \emph{weak focus}. The necessary conditions for a center can be obtained by calculating the \emph{focal quantities}, which are polynomials in the 
coeficients of \eqref{eq:FN} that are computed using a Lyapunov function of the form
\begin{equation}\label{eq:Lyap}
V(x,y)=\ds\frac12(x^2+y^2)+V_3(x,y)+V_4(x,y)+\ldots
,
\end{equation}

where $V_j(x,y)$ is a homogeneous polynomial of degree $j$ such that
$$\dot V=\eta_2r^2+\eta_4r^4+\ldots+\eta_{2k}r^{2k}+\ldots$$

with $r=x^2+y^2$.
\medskip

To find $\eta_{2k}, k\in \mathbb N$, one solves a sequence of linear equations in the coefficients of $P$ and $Q$ as follows. If we denote by $D_k$ the coefficients of degree $k$ in $\dot V$,
 the condition 
$D_k=0$ provides a system of linear equations that allows to find the coefficients of $V_k$. This linear system has a unique solution for $k$ even. In
this case one can find $\eta_{2k}$ by setting 
$D_{2k}=\eta_{2k}(x^2+y^2)^k$. For more details, see 
\cite{Blows-Lloyd,Lloyd}.
\medskip

The coefficients $\eta_{2k}$ are known as \emph{focal values}. From Lyapunov theorems \cite{Hahn,Wig}, the stability of the origin depends on the first nonzero focal
value, and the origin is a center if and only if all the focal values are zero. Since the focal values $\eta_{2k}$ are  all polynomials, Hilbert's Nullstellensatz \cite{Almira,Hilbert} implies that the ideal spanned by them has a finite basis. Therefore, there exists $M>0$ such that $\eta_{2\ell}=0,$ for $\ell\leq M$, implies $\eta_{2\ell}=0$
 for all $\ell$. The value of $M$ is not known \emph{a priori}, so the number of focal values one needs to calculate for finding the basis is also unknown. Due to the extension of the calculations one needs to perform in order to compute the focal values, the use of computational packages becomes handy. For instance, the software
 \emph{Mathematica} \cite{mat} can be used to calculate the first focal values, using the relations  $\eta_2=\eta_4=\dots=\eta_{2k}=0$ 
to eliminate some of the 
variables in $\eta_{2k+2}$. Removing common factors from the reduced focal values one proceeds until it is possible to prove that the reminding expressions do
 not vanish simultaneously. The conditions over the parameters of $X$ used for computing the focal values provide the complete necessary center conditions. To prove 
that the conditions obtained are also sufficient one can use, for instance, symmetries or integrating factors for the vector field~$X$. The reduced focal value $\eta_{2k+2}$, with the strictly positive factors removed, is known as the \emph{Lyapunov quantity} $L(k)$. Note that $L(0)=\lambda$ and, with this notation, the necessary center conditions are given by a basis for 
$\{ L(k)=0, \ \forall k\ge 0\}$.  In this context, we say that the origin is a \emph{weak focus of order} $k$ if $L(i)=0$, for $i=0,1,2,\ldots, k-1$ and $L(k) \ne 0$.\\

From Lyapunov's theorems \cite{Hahn,Wig} follows that a weak focus of order $k$ is attracting if $L(k)<0$ and repelling if $L(k)>0$. In addition, if the origin is a weak focus of order $k$, at most $k$ limit cycles can bifurcate from the origin \cite{Blows-Lloyd}. These limit cycles arise from Hopf bifurcations and are known as \emph{small-amplitude} or 
\emph{infinitesimal} limit cycles.

\section{Perturbation of Hamiltonian systems and Melnikov functions}
\label{sec:Melnikov}

From section \ref{sec:center} we know that one way to obtain limit cycles in planar polynomial systems and, in particular, in a Kukles system, is via changes of stability of a saddle-focus equilibrium point via Hopf bifurcations; the use of Lyapunov quantities plays a relevant role.  In this section we consider limit cycles that bifurcate from a center, via the perturbation of a Hamiltonian system. We consider a planar vector field $X_{\eps}$ of the form
\begin{equation}\label{eq:Pert}
X_{\eps}=
\left\{
\begin{array}{rcl}
\dot x &=& \ds\frac{\partial H}{\partial y}(x,y)+\ds\sum_{k=1}^{\infty}\eps^kf_k(x,y, \delta)\\[0,3cm]
\dot y &=& -\ds\frac{\partial H}{\partial x}(x,y)+\ds\sum_{k=1}^{\infty}\eps^kg_k(x,y,\delta)
\end{array}
\right.,
\end{equation}
where $H, f_k,g_k$ are analytic functions in an open set $U\subset \R^2, \ \eps$ is a small non-negative parameter and $\delta \in D\subset \R^n$ is a vector of parameters in a compact set $D$. We assume that the origin is a singularity of \eqref{eq:Pert}
 for all $\eps \in \mathbb R$ and the system has a center at the origin for $\eps=0$. We call \eqref{eq:Pert}, with $\eps=0$ the unpertutbed system, and for $\eps \ne 0$  perturbed one. The unperturbed system $X_0$ is Hamiltonian, and has the form
\begin{equation}\label{eq:Nopert}
X_0=
\left\{
\begin{array}{rcl}
\dot x &=& \ds\frac{\partial H}{\partial y}(x,y)\\[0,3cm]
\dot y &=& -\ds\frac{\partial H}{\partial x}(x,y)
\end{array}
\right.
\end{equation}

For $\eps_0>0$, we assume the existence of an open interval $J=(0,a)$ on the $x$-axis, transverse to the flow of $X_\eps$ for $|\eps|\le\eps_0$. There exists a subinterval $I\subset J$ such that the Poincar\'e return map 
$$
\begin{array}{rcl}
\pi:I\times (-\eps_0, \eps_0) & \mapsto & J\\[0,3cm]
(x,\eps) & \mapsto & \pi(x,\eps)
\end{array}
$$
is well defined, and maps $(x,\eps)$ to the $x$-coordinate $\pi(x,\eps)$ of the first return of the point $(x,0)$ to $J$ by the flow of $X_\eps$. We can define the displacement function
$$
\begin{array}{rcl}
d:I\times (-\eps_0, \eps_0) & \mapsto & \R\\[0,3cm]
(x,\eps) & \mapsto & d(x,\eps)
\end{array}
$$
by $d(x,\eps)=\pi(x,\eps)-x$.
 \medskip

An orbit of $X_\eps$ through the point $(x,0)$, with $x\ne0$ is periodic if and only if $(x,\eps)$ is as zero of the displacement function $d$. Since the origin is a center for $\eps=0$,  we have $d(x,0)=0$ for $x\in I$. Therefore, for $x>0$  and $\eps_0$ small enough, we have the following series expansion for $d$:
\begin{equation}
d(x,\eps)\ =\ \ds\sum_{k=1}^{\infty}M_k(x)\eps^k \ = \ \left.\ds\sum_{k=1}^{\infty}\ds\frac{1}{k!}\ds\frac{\partial d(x,\eps)}{\partial \eps^k}\right|_{\eps=0}\eps^k.
\label{eq:Melf}
\end{equation}

The first nonzero function $M_n(x)$ in \eqref{eq:Melf} is known as the \emph{n-th  Melnikov function}. For $n=1$, $M_1$ is known as the \emph{Melnivok integral}. Finding simple zeroes of the first nonzero Melnikov function $M_n$ means finding limit cycles that bifurcate from perturbing the Hamiltonian system \eqref{eq:Nopert}. One usually aims for finding $M_1$, but if $M_1(x)\equiv 0$ one needs to calculate higher-order Melnikov functions. Unfortunately, there is no explicit formula in the literature for the $n$-th Melnikov function, except for special cases when the hamiltonian function asocciated to $X_0$ has a particular form \cite{Fran}, and for some recursive formulas.

\subsection{Representations for Melnikov functions and limit cycles}
Suppose that system \eqref{eq:Nopert} has a family of periodic orbits, given by  $L_h:H(x,y)=h,\ h \in (0,\beta)$ and such that $L_h$ approaches the origin when $h\to 0$. Let
$h=h_0 \in (0,\beta)$ and $A(h_0)\in L_{h_0}$. Consider the transverse section $l$ of \eqref{eq:Nopert} through $A(h_0)$. For $h$ close to $h_0$ the periodic orbit~$L_h$ intersects $l$ in a unique point $A(h)$. Consider the positive orbit $\gamma(h,\eps, \delta)$ of~\eqref{eq:Pert} with initial point $A(h)$. Let $B(h,\eps,\delta)$ be the first intersection of $\gamma$ with $l$. Then,
$$
\begin{array}{rcl}
H(B)-H(A)&=&\ds\int_{AB}dH \\[0,3cm]
&=& \eps[M(h,\delta)+\mathcal O(\eps)]\\[0,3cm]
&=& \eps F(h,\eps,\delta)
\end{array}
$$
with
$$
\begin{array}{rcl}
M(h,\delta)&=&\ds\oint_{L_h}(H_yq+H_xp)dt\\[0,3cm]
&=&\ds\oint_{L_h}(qdx-pdy)dt\\[0,3cm]
&=&\ds\iint_{H\le h}(p_x+q_y)dxdy.
\end{array}
$$
For $\eps$ small enough, system \eqref{eq:Pert} has a limit cycle near the origin if and only if $F(h,\eps,\delta)$ has a positive zero for $h$ near $h=0$. From here, $M(h,\delta)$ controls the number of limit cycles of \eqref{eq:Pert} near the origin. We can assume that the Hamiltonian function has the form
\begin{equation}\label{eq:FormaHam}
H(x,y)=\ds\frac\omega 2(x^2+y^2)+\ds\sum_{i+j\ge3}h_{ij}x^iy^j,\ \omega>0.
\end{equation}
The following theorem holds \cite{Han}:
\begin{theorem}\label{theo:teoHan}
Let H be of the form \eqref{eq:FormaHam}. Then, $M(h,\delta)$ is $C^{\infty}$ in $0\le h \ll 1$, with
$$
M(h,\delta)=h\ds\sum_{s\ge 0}b_s(\delta)h^s
$$
If \eqref{eq:Nopert} is analytic, so is $M$. Moreover, if there exists $k\ge1,\delta_0 \in D$ such that $b_k(\delta_0)\ne0$ and
\begin{equation}
\begin{array}{ccc}
&b_j(\delta_0)=0, \qquad j=0,1,\ldots,k-1,&\\[0,3cm]
&det\left(\ds\frac{\partial (b_0,\ldots,b_{k-1})}{\partial (\delta_1,\ldots,\delta_k)}\right)(\delta_0)\ne0,&
\end{array}
\label{eq:Handet}
\end{equation}
where $\delta=(\delta_1,\ldots,\delta_m), \ m\ge k,$ then there exist a constant $\eps_0>0$ and a neighborhood $V$ of the origin such that for all $0<|\eps|<\eps_0$ and $|\delta-\delta_0|<\eps_0$, \eqref{eq:Pert} has at most $k$ limit cycles in $V$. Moreover, for any neighborhood $V_1$ of the origin there exists $(\eps,\delta)$ near $(0,\delta_0)$ such that system \eqref{eq:Pert} has $k$ limit cycles in $V_1$.
\end{theorem}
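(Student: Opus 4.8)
The plan is to split the argument into three stages: (i) deriving the expansion $M(h,\delta)=h\sum_{s\ge0}b_s(\delta)h^s$ with $b_s$ of the stated regularity; (ii) proving the bound ``at most $k$ limit cycles''; and (iii) realizing exactly $k$ of them in any prescribed neighbourhood. For stage (i) I would start from the area representation $M(h,\delta)=\iint_{H\le h}(p_x+q_y)\,dx\,dy$ displayed above, with $p,q$ the first-order perturbation functions (i.e. $f_1,g_1$) in \eqref{eq:Pert}. Since $H$ has the form \eqref{eq:FormaHam}, the origin is a nondegenerate minimum of $H$, so a Morse-type change of variables --- analytic when $H$ is analytic --- conjugates $H$ to $\tfrac{\omega}{2}(u^2+v^2)$; carrying along its (smooth, resp.\ analytic, positive) Jacobian, the region $\{H\le h\}$ becomes the disk $\{u^2+v^2\le 2h/\omega\}$ and the integrand stays smooth (resp.\ analytic). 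Passing to polar coordinates and integrating in the angle first annihilates all odd powers of the radius, so $M(h,\delta)=\int_0^{\sqrt{2h/\omega}}\rho\,\widetilde G(\rho,\delta)\,d\rho$ with $\widetilde G(\rho,\delta)=\sum_{m\ge0}a_m(\delta)\rho^{2m}$; integrating term by term gives $M(h,\delta)=h\sum_{s\ge0}b_s(\delta)h^s$ with $b_s=\tfrac{2^s}{(s+1)\omega^{s+1}}a_s$. As the normalising change of variables does not depend on $\delta$, the $b_s$ inherit $C^{\infty}$ (resp.\ analytic) dependence on $\delta$ from $p,q$.

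For stage (ii) I would pass to the displacement: along the transverse section one has $H(B)-H(A)=\eps F(h,\eps,\delta)$ with $F(h,\eps,\delta)=M(h,\delta)+\mathcal{O}(\eps)$, and since $H(B)-H(A)$ vanishes identically when $h=0$, $F$ is divisible by $h$, say $F=h\,\widehat F$ with $\widehat F$ as regular as the flow and $\widehat F(h,0,\delta)=\sum_{s\ge0}b_s(\delta)h^s$. Limit cycles of \eqref{eq:Pert} near the origin correspond exactly to the positive zeros of $\widehat F(\cdot,\eps,\delta)$ near $h=0$. The hypotheses $b_0(\delta_0)=\dots=b_{k-1}(\delta_0)=0$, $b_k(\delta_0)\neq0$ give $\partial_h^{k}\widehat F(0,0,\delta_0)=k!\,b_k(\delta_0)\neq0$, so by continuity $\partial_h^{k}\widehat F$ is nonzero on a full neighbourhood of $(0,0,\delta_0)$; iterated use of Rolle's theorem then shows $\widehat F(\cdot,\eps,\delta)$ has at most $k$ zeros there. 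Choosing $V$ and $\eps_0$ to cut out such a neighbourhood yields at most $k$ limit cycles of \eqref{eq:Pert} in $V$ for $0<|\eps|<\eps_0$, $|\delta-\delta_0|<\eps_0$. (Equivalently, one may invoke the Malgrange--Weierstrass preparation theorem to write $\widehat F$ as a nonvanishing unit times a degree-$k$ polynomial in $h$.)

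For stage (iii), \eqref{eq:Handet} says the map $\delta\mapsto(b_0(\delta),\dots,b_{k-1}(\delta))$ has invertible Jacobian in $(\delta_1,\dots,\delta_k)$ at $\delta_0$, so by the implicit function theorem $(b_0,\dots,b_{k-1})$ can be assigned any sufficiently small values by moving $\delta$ near $\delta_0$, while $b_k$ stays near $b_k(\delta_0)\neq0$. Fixing distinct $0<\mu_1<\dots<\mu_k$ and a small scale $\lambda>0$, I would choose $\delta=\delta(\lambda)$ so that the degree-$k$ truncation of $\widehat F(\cdot,0,\delta)$ equals $b_k(\delta)\prod_{i=1}^{k}(h-\lambda\mu_i)$; this forces $b_j(\delta(\lambda))=\mathcal{O}(\lambda^{k-j})$, which is precisely the order at which the omitted tail $\sum_{s>k}b_s(\delta)h^s=\mathcal{O}(h^{k+1})$ is subdominant at the scale $h\sim\lambda$, so for $\lambda$ small $\widehat F(\cdot,0,\delta(\lambda))$ still has $k$ simple positive zeros near $\lambda\mu_1,\dots,\lambda\mu_k$. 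Simple zeros persist under $C^1$-small perturbations, hence $\widehat F(\cdot,\eps,\delta(\lambda))$ keeps $k$ simple positive zeros for $\eps$ small; these are $k$ hyperbolic limit cycles of \eqref{eq:Pert}, and taking $\lambda$ small relative to a given $V_1$ places all of them inside $V_1$.

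I expect stage (iii) to be the main obstacle: one must control the interaction between the $k$ prescribed roots and the entire (infinite) $h$-tail of $\widehat F$, which requires the $\mathcal{O}(\lambda^{k-j})$ scaling of the $b_j$ together with a Rouch\'e- or sign-change argument to locate the perturbed zeros precisely. A more routine technical point is ensuring, in stage (i), that the Morse normalisation of $H$ can be taken analytic in the analytic case and that its Jacobian does not spoil the regularity of the transformed integrand.
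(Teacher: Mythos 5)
The paper itself offers no proof of this statement (it is quoted from \cite{Han}), so your attempt has to stand on its own, and the decisive gap is in your stage (ii), in the sentence ``$F=h\,\widehat F$ with $\widehat F$ as regular as the flow''. The full bifurcation function is smooth in the section coordinate $x$, not in the energy $h$: one has $H(B)-H(A)=q(P(x,\eps,\delta))-q(x)$ with $q(x)=H(x,0)=\frac{\omega}{2}x^2+O(x^3)$ and $P$ the (smooth) return map, and inverting $h=q(x)$ gives $x=\sqrt{2h/\omega}\,\bigl(1+O(\sqrt h)\bigr)$. The angular-averaging mechanism that kills the odd powers in your stage (i) works only for the $\eps$-linear term $M(h,\delta)$; at higher order in $\eps$ the $x$-expansion of the displacement has nonvanishing odd coefficients, so for $\eps\neq 0$ the function $\widehat F(h,\eps,\delta)$ contains terms of the form $\eps\,c\,h^{j+1/2}$. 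These are small in the sup norm but their $h$-derivatives blow up as $h\to 0^{+}$, so $\partial_h^{k}\widehat F$ is not continuous (indeed not defined) at $h=0$ when $\eps\neq0$, and neither the iterated Rolle argument nor Weierstrass preparation in the variable $h$ can be run on a full neighbourhood of $(0,0,\delta_0)$ --- which is exactly the region where the small limit cycles to be counted live. Retreating to the variable $x$, where everything is smooth, the same Rolle argument only bounds the number of positive zeros by roughly $2k+1$, not $k$.

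What closes this gap (and is the route in the cited source) is structural information about the displacement function near a nondegenerate focus: writing $d(x,\eps,\delta)=\sum_{j\ge1}v_j(\eps,\delta)x^j$, one proves $v_{2j+1}=\eps\bigl(N_j b_j(\delta)+O(|b_0,\dots,b_{j-1}|)\bigr)+O(\eps^2)$ with constants $N_j>0$, while each even coefficient $v_{2j}$ is dominated by (lies in the ideal generated by) the preceding odd ones; a Bautin-type zero-counting lemma then gives at most $k$ positive zeros when $b_0,\dots,b_{k-1}$ are small and $b_k$ stays bounded away from zero. Your stages (i) and (iii) are essentially sound: the area representation plus the Morse/polar argument does give the expansion of $M$ (though in the merely $C^{\infty}$ case you should use the even-function/Whitney argument rather than integrate a possibly divergent Taylor series term by term), and placing $k$ simple zeros at a scale $h\sim\lambda$ that is fixed before letting $\eps\to0$, then invoking persistence of simple zeros, is the standard realization argument and is unaffected by the regularity problem above.
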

This theorem provides an analytic expression for the first Melnikov function and states that the number of limit cycles that bifurcate from a center is related to the algebraic multiplicity of $0$ as a root of the first Melnikov function.
\bigskip

Between 1996 and 2002 Giacomini, Llibre and Viano worked on a method for computing analytically the shape of the limit cycles that bifurcate from a center. The method is based on the following theorem \cite{Via-Lli-Gia2}:
\begin{theorem}
Let $X=(P,Q)$ be a $C^1$ vector field on an open set $U\subset \R^2$. Let $V=V(x,y)$ be a $C^1$ solution of the partial differential equation
\begin{equation}\label{eq:Recfi}
P\ds\frac{\partial V}{\partial x}+Q\ds\frac{\partial V}{\partial y}=\left(\ds\frac{\partial P}{\partial x}+\ds\frac{\partial Q}{\partial y}\right)V.
\end{equation}
If $\gamma$ is a limit cycle of $X$, then $\gamma \subset \left\{ (x,y)\in U : V(x,y)=0\right\}.$
\end{theorem}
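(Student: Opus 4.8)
The plan is to show that a limit cycle $\gamma$ of $X$ must lie inside the zero set of any solution $V$ of \eqref{eq:Recfi}. First I would recall the geometric meaning of equation \eqref{eq:Recfi}: writing $\dot V$ for the rate of change of $V$ along the orbits of $X$, the left-hand side $P\,\partial_x V + Q\,\partial_y V$ is exactly $\dot V$, and the coefficient $\partial_x P + \partial_y Q$ is the divergence $\operatorname{div} X$. So \eqref{eq:Recfi} says $\dot V = (\operatorname{div} X)\,V$ along solutions; in particular $V$ is an inverse integrating factor in the terminology used earlier in the paper (a function $C$ with $\dot C = C K$ defines an invariant curve $C=0$), with $K = \operatorname{div} X$.

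Next I would parametrize the limit cycle. Let $\gamma$ be a limit cycle of period $T$, given by a solution $(x(t),y(t))$ with $(x(t+T),y(t+T)) = (x(t),y(t))$. Define the scalar function $v(t) := V(x(t),y(t))$. By the chain rule and \eqref{eq:Recfi},
\begin{equation}\label{eq:odealongorbit}
\frac{dv}{dt} = P\,\frac{\partial V}{\partial x} + Q\,\frac{\partial V}{\partial y} = \bigl(\operatorname{div} X\bigr)(x(t),y(t))\, v(t) =: \phi(t)\, v(t),
\end{equation}
where $\phi(t) := (\operatorname{div} X)(x(t),y(t))$ is continuous and $T$-periodic. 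This is a linear scalar ODE for $v$, so it integrates explicitly: $v(t) = v(0)\,\exp\!\bigl(\int_0^t \phi(s)\,ds\bigr)$.

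The key step is then to exploit periodicity together with the fact that $\gamma$ is a \emph{limit} cycle rather than a member of a continuous band of periodic orbits. Evaluating the solution formula of \eqref{eq:odealongorbit} at $t=T$ and using $v(T)=v(0)$ gives $v(0)\bigl(e^{\mu}-1\bigr)=0$, where $\mu := \int_0^T \phi(s)\,ds = \int_0^T (\operatorname{div} X)(\gamma(s))\,ds$ is the characteristic (Floquet-type) exponent of $\gamma$. I would now split into cases. If $\mu \neq 0$, then $e^\mu - 1 \neq 0$, forcing $v(0)=0$, hence $v(t)\equiv 0$ and $\gamma \subset \{V=0\}$, as desired. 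If $\mu = 0$, the above relation is vacuous, and I need the extra input that $\gamma$ is isolated: were $v(0)\neq 0$, then $v(t)$ would be nowhere zero on $\gamma$, so near $\gamma$ the level set $\{V = c\}$ for $c$ close to $v(0)$ would, by the implicit function theorem (using that $\nabla V \neq 0$ along $\gamma$ since $V$ is $C^1$ and $v$ is a nonzero constant — more carefully, one argues that $V$ restricted to a transversal section is monotone near $\gamma$ or else $\gamma$ fails to be isolated), produce a continuum of periodic orbits accumulating on $\gamma$, contradicting isolation. So again $v\equiv 0$.

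The main obstacle is the degenerate case $\mu=0$: there the differential equation along the orbit does not by itself pin down $v(0)$, and one genuinely must invoke the hypothesis that $\gamma$ is a limit cycle — an isolated periodic orbit — not merely a periodic orbit. The cleanest way to close this gap is to observe that if $V$ does not vanish on $\gamma$ then $V$ is, in a tubular neighborhood of $\gamma$, a non-vanishing inverse integrating factor, and a classical fact (the Giacomini–Llibre–Viano framework, cf.\ the cited reference) is that in an annular region where the inverse integrating factor has no zeros, every periodic orbit is non-isolated; equivalently, the return map on a transversal is the identity there. Since $\gamma$ is isolated this is impossible, so $V$ must vanish somewhere on $\gamma$, and by \eqref{eq:odealongorbit} it vanishes identically on $\gamma$, giving $\gamma \subset \{(x,y)\in U : V(x,y)=0\}$.
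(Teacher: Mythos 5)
The paper itself gives no proof of this statement; it is quoted verbatim from \cite{Via-Lli-Gia2}, so your attempt must be measured against the standard argument in that reference. Your opening steps are correct and match the standard setup: along a periodic solution the scalar $v(t)=V(x(t),y(t))$ satisfies the linear equation $\dot v=(\operatorname{div}X)(x(t),y(t))\,v$, so either $v\equiv 0$ on $\gamma$ or $v$ never vanishes there, and when $\int_0^T \operatorname{div}X\,dt\neq 0$ periodicity forces $v\equiv 0$. You also correctly identify that the whole difficulty sits in the remaining case. But your handling of that case has a genuine gap. The implicit-function-theorem argument is invalid: $V$ is \emph{not} a first integral, since \eqref{eq:Recfi} gives $\dot V=(\operatorname{div}X)V$, which is nonzero off $\{V=0\}$ in general; hence the level sets $\{V=c\}$ with $c\neq 0$ are not invariant and cannot be promoted to ``a continuum of periodic orbits.'' (Also, $v$ is periodic but not constant on $\gamma$ in general, and nothing in the hypotheses gives $\nabla V\neq 0$ along $\gamma$.) Your fallback, citing as a ``classical fact'' that a region with a non-vanishing inverse integrating factor contains no isolated periodic orbit, is exactly the content of the theorem being proved (given your dichotomy the two statements are equivalent), and you cite it from the very reference at stake; as written, the proof is circular at its only hard point.

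The gap can be closed without case distinction, and this is essentially the Giacomini--Llibre--Viano argument. Suppose $V$ does not vanish identically on $\gamma$; by your dichotomy $V\neq 0$ on all of $\gamma$, so there is an annular neighborhood $N$ of $\gamma$ on which $V\neq 0$ and $X\neq 0$. Equation \eqref{eq:Recfi} says precisely that the $1$-form $\omega=\bigl(Q\,dx-P\,dy\bigr)/V$ is closed on $N$; equivalently, $X/V$ is divergence-free there. Since $\omega$ annihilates the vector field and $\gamma$ is an orbit generating the first homology of $N$, we get $\oint_\gamma \omega=0$, so $\omega=dH$ for a single-valued function $H$ on $N$. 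Then $H$ is a first integral of $X$ with $dH=\omega\neq 0$ on $N$, so every orbit near $\gamma$ lies on a closed regular level curve of $H$ and is itself periodic; this contradicts the assumption that $\gamma$ is an isolated periodic orbit. (Alternatively, one argues that the flow preserves the finite measure $dx\,dy/\lvert V\rvert$ on $N$, which is incompatible with nearby orbits spiraling toward or away from $\gamma$.) This uniform argument also subsumes your hyperbolic case $\int_0^T\operatorname{div}X\,dt\neq 0$, which is the only part your proposal establishes independently.
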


The function $V$ is known as a \emph{reciprocal integrating factor}, name after the fact that the function $R=\frac1V$ defined on $U-\{V=0\}$ is an integrating factor of the vector field $X$. In \cite{Via-Lli-Gia}, the authors use the function $V$ in order to describe the limit cycles that bifurcate from a Hamiltonian center. In \cite{Via-Lli-Gia3}, they  look for a representation for  $V$ when the vector field $(P,Q)$ is of the form \eqref{eq:Pert}. It is proved that an analytic solution for \eqref{eq:Recfi} has the form
$$V=V(x,y,\eps)=\ds\sum_{k=0}^{\infty}\eps^k V_k(x,y),$$
and obtain a recursive formula for $V_k$ by solving a sequence of linear partial differential equations. For $V_0$ the equation is
$$\ds\frac{\partial H}{\partial y}\frac{\partial V_0}{\partial x}-\ds\frac{\partial H}{\partial x}\frac{\partial V_0}{\partial y}=0$$
while for $V_k$ the equation is 
$$
\ds\frac{\partial H}{\partial y}\frac{\partial V_k}{\partial x}-\ds\frac{\partial H}{\partial x}\frac{\partial V_k}{\partial y}=\ds\sum_{j=0}^{k-1}V_j\left(\ds\frac{\partial f_{k-j}}{\partial x}+\ds\frac{\partial g_{k-j}}{\partial y}\right)-\ds\sum_{j=0}^{k-1}\left( f_{k-j}\ds\frac{\partial V_j}{\partial x} +g_{k-j}\ds\frac{\partial V_j}{\partial y} \right).
$$
The solution for  $V_0$ is used for finding $V_1$, and so on. One of the main results of~\cite{Via-Lli-Gia3} is the following:
\begin{theorem}\label{teo:LliMel}(Giacomini, Llibre, Viano, \cite{Via-Lli-Gia3})
\begin{enumerate}
\item[(i)] If  $T=T(h)$ is the period of the periodic orbit $H(x,y)=h$ of \eqref{eq:Nopert}, then
$$V_0(h)=\ds\int_{0}^{T(h)}\left(f_1\ds\frac{\partial H}{\partial x}+g_1\ds\frac{\partial H}{\partial y}\right)dt$$
is the Poincaré-Melnikov integral of \eqref{eq:Pert} associated to the orbit $H(x,y)=h$ when $V_0(h)\not \equiv 0$.
\item[(ii)] $V_k$, with $k\ge 1$ is calculated recursively as $V_k(x,y)=V_k^p(x,y)+W_k(h)$, where
$$
\begin{array}{rcl}
V_{k+1}^p(x,y)&=&V_0(h)\ds\int_0^t\left(\ds\frac{\partial f_{k+1}}{\partial x}+\ds\frac{\partial g_{k+1}}{\partial y}\right)dt-V_0'(h)\ds\int_0^t\left(f_{k+1}\ds\frac{\partial H}{\partial x}+g_{k+1}\ds\frac{\partial H}{\partial y}\right)dt\\[0,3cm]
&&+\ds\sum_{j=1}^k\ds\int_0^t\left[ V_j\left( \ds\frac{\partial f_{k+1-j}}{\partial x}+\ds\frac{\partial g_{k+1-j}}{\partial y}\right)-\left(f_{k+1-j}\ds\frac{\partial V_j}{\partial x}+g_{k+1-j}\ds\frac{\partial V_j}{\partial y}\right)\right]dt
\end{array}
$$
and $V_0(h)W_k'(h)-V_0'(h)W_k(h)=\xi (h)$, with
$$
\begin{array}{rcl}
\xi (h)&=& V_0(h)\varphi_{k+1}'(h)-V_0'(h)\varphi_{k+1}(h)+\ds\sum_{j=1}^{k-1}\left[W_j(h)\varphi _{k+1-j}'(h)-W_j'(h)\varphi_{k+1-j}(h) \right]\\[0,3cm]
&& + \ds\int_0^{T(h)}\ds\sum_{j=1}^k\left[ V_j^p\left( \ds\frac{\partial f_{k+1-j}}{\partial x}+\ds\frac{\partial g_{k+1-j}}{\partial y}\right)-\left(f_{k+1-j}\ds\frac{\partial V_j^p}{\partial x}+g_{k+1-j}\ds\frac{\partial V_j^p}{\partial y}\right)\right]dt.
\end{array}
$$
The symbol $'$ denotes derivation with respect to $h$, and
$$\varphi_k(h)=\ds\int_0^{T(h)}\left(f_k \ds\frac{\partial H}{\partial x}+g_k\ds\frac{\partial H}{\partial y}\right)dt.$$
\end{enumerate}
\end{theorem}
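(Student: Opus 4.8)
The plan is to substitute the formal series $V(x,y,\eps)=\sum_{k\ge0}\eps^kV_k(x,y)$ into the reciprocal integrating factor equation \eqref{eq:Recfi} written for the perturbed field \eqref{eq:Pert}, and to collect equal powers of $\eps$. This produces exactly the hierarchy of linear partial differential equations displayed just before the statement: $\mathcal{L}V_0=0$ at order $\eps^0$, and $\mathcal{L}V_k=\mathcal{R}_k$ at order $\eps^k$ for $k\ge1$, where $\mathcal{L}:=H_y\,\partial_x-H_x\,\partial_y$ is the derivative along the unperturbed Hamiltonian flow and $\mathcal{R}_k$ involves only $V_0,\dots,V_{k-1}$ together with $f_1,g_1,\dots,f_k,g_k$. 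Everything then hinges on one structural observation about $\mathcal{L}$: on the period annulus the orbits of $X_0$ are the level curves $H=h$, so (a) $\mathcal{L}u=0$ forces $u=u(h)$, and (b) $\mathcal{L}u=g$ has a single-valued solution on the annulus if and only if $\oint_{L_h}g\,dt\equiv0$, in which case the solutions are $u(x,y)=\int_0^t g\,dt$ along the orbit through $(x,y)$ (normalized to vanish on a fixed transversal $l$) plus an arbitrary function of $h$. I would first record (a)--(b) carefully, together with the two integration identities I will use repeatedly: the co-area formula $\frac{d}{dh}\iint_{H\le h}\Phi\,dx\,dy=\oint_{L_h}\Phi\,dt$ and the Green's-theorem rewriting $\oint_{L_h}(\alpha\,\partial_xH+\beta\,\partial_yH)\,dt=\iint_{H\le h}(\partial_x\alpha+\partial_y\beta)\,dx\,dy$.

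Part (i) then follows from the first two orders. Order $\eps^0$ gives $V_0=V_0(h)$. Order $\eps^1$ gives $\mathcal{L}V_1=V_0(h)(\partial_xf_1+\partial_yg_1)-V_0'(h)(f_1\,\partial_xH+g_1\,\partial_yH)$, and the solvability condition (b) turns into the first-order linear ODE $V_0(h)\oint_{L_h}(\partial_xf_1+\partial_yg_1)\,dt=V_0'(h)\oint_{L_h}(f_1\,\partial_xH+g_1\,\partial_yH)\,dt$ for the still-unknown $V_0$. Writing $\varphi_1(h)=\oint_{L_h}(f_1\,\partial_xH+g_1\,\partial_yH)\,dt$ and applying the two identities above, the right-hand loop integral equals $\iint_{H\le h}(\partial_xf_1+\partial_yg_1)\,dx\,dy$ and the left-hand one equals $\varphi_1'(h)$, so the ODE collapses to $(V_0/\varphi_1)'=0$; hence $V_0(h)=\varphi_1(h)$ up to an inessential nonzero constant (rescaling $V$ by a constant does not affect its zero set). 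Since $\varphi_1(h)=\iint_{H\le h}(\partial_xf_1+\partial_yg_1)\,dx\,dy$ is precisely the Poincar\'e--Melnikov integral $M(h,\delta)$ of \Sref{sec:Melnikov}, this is the assertion of (i); the hypothesis $V_0\not\equiv0$ is exactly the nondegeneracy that makes $V_0$ usable as a nonvanishing building block in the recursion.

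For part (ii) I would argue by induction on $k$. Assume $V_0,\dots,V_{k-1}$ are known, each written as $V_j=V_j^p+W_j(h)$. The order-$\eps^k$ equation $\mathcal{L}V_k=\mathcal{R}_k$ has right-hand side $V_0(h)(\partial_xf_k+\partial_yg_k)-V_0'(h)(f_k\,\partial_xH+g_k\,\partial_yH)+\sum_{j=1}^{k-1}\big[V_j(\partial_xf_{k-j}+\partial_yg_{k-j})-(f_{k-j}\,\partial_xV_j+g_{k-j}\,\partial_yV_j)\big]$ (splitting off the $j=0$ term and using $V_0=V_0(h)$), and integrating it along the orbit from the transversal $l$ yields verbatim the stated formula for $V_k^p$ (the theorem's display, shifted by one index). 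The homogeneous part $W_k(h)$ is still free at this stage; it is pinned down by the solvability requirement (b) applied to the next equation, $\oint_{L_h}\mathcal{R}_{k+1}\,dt\equiv0$, because $\mathcal{R}_{k+1}$ contains $V_k=V_k^p+W_k$ in its top term. Substituting $V_j=V_j^p+W_j$ into $\mathcal{R}_{k+1}$, integrating over one period, and using the co-area and Green identities to convert all $V_0$- and $W_j$-dependent contributions into the loop integrals $\varphi_m(h)$ (leaving the $V_j^p$-contributions as an explicit integral over $[0,T(h)]$), the condition reduces, after rearrangement, to the first-order linear ODE $V_0(h)W_k'(h)-V_0'(h)W_k(h)=\xi(h)$ with $\xi$ the displayed expression. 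Solving it by variation of parameters, using $V_0$ (which is $\not\equiv0$ by hypothesis) as the homogeneous solution, determines $W_k$ up to an additive multiple of $V_0$, which is again inessential, and closes the induction.

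The main obstacle, to my mind, is not any individual computation but the consistency of the scheme: one must verify that the arbitrary ``function of $h$'' left free when solving $\mathcal{L}V_k=\mathcal{R}_k$ is genuinely fixed --- and fixed in exactly the stated way --- by the solvability condition one order higher, and that this never runs into an obstruction, i.e.\ that the period integrals which must vanish do vanish once the earlier $W_j$'s have been chosen. Granting (a)--(b) and the two integration identities, the remaining work is the lengthy but routine bookkeeping of the double sums in $\mathcal{R}_{k+1}$ and matching terms with the definition of $\xi(h)$.
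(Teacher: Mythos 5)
This theorem is quoted from Giacomini--Llibre--Viano \cite{Via-Lli-Gia3}; the paper itself offers no proof beyond displaying the hierarchy of linear PDEs obtained by inserting $V=\sum_{k\ge0}\eps^kV_k$ into \eqref{eq:Recfi} for the field \eqref{eq:Pert}. Your sketch reconstructs essentially the same argument as the cited source — the kernel of the along-flow derivative consists of functions of $h$, solvability at each order is the vanishing of a period integral, the Green/co-area identities identify $V_0$ with the Poincar\'e--Melnikov integral $\varphi_1$, and the next-order solvability condition pins down $W_k$ through the linear ODE $V_0W_k'-V_0'W_k=\xi$ — so it is correct in outline and consistent with the framework the paper presents.
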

From part (i) of the theorem above one can compute the first Melnikov function of a perturbed Hamiltonian system, while part (ii) is for computing the following $V_k$, which determine the shape and position of the limit cycles that bifurcate from a center. In \cite{Via-Lli-Gia3} there are examples showing explicit ecuations for the limit cycles that bifurcate in the Van der Pol and Liénard systems.
\medskip

Poincaré proved that the maximum number of simple zeroes of $V_0(h)$ coincide with the maximum number of limit cycles of system \eqref{eq:Pert} that bifurcate from periodic orbits $H(x,y)=h$ of \eqref{eq:Nopert} when $\eps$ is small. This limit cycles are known as  \emph{global} or \emph{large-amplitude} limit cycles.
\medskip

When the first Melnikov function is identically null, one needs to compute higher order Melnikov functions, which is a challenging task. There are some results for second order Melnikov due to Iliev \cite{Il}, and for higher order Melnikov funcions via Abelian integrals \cite{Fran}. Some explicit examples in wich the vector field is written in complex coordinates can be found in
\cite{Bui-Gas-Yang} and \cite{Fran}.

Overall, in one hand we have small-amplitude limit cycles bifurcating from an equilibrium point, while on the other hand we have large-amplitude limit cycles bifurcating from a Hamiltonian center. These two mechanisms for obtainig limit cycles and the limit cycles themselves are different in principle. However, they can be related. To our understanding, the relation between these two type of limit cycles, conditions for which they can coincide and what type of vector fields allow this to happen is far from being completely understood and remains an interesting challenge beyond the scope of this paper; we show an example of this relation in section \ref{sec:odd}.

\section{Coexistence of limit cycles with an invariant circle in Kukles systems}
\label{sec:kukcir}

In this section we study a family of Kukles systems, in which there is an invariant circle that coexists with an unique limit cycle. We prove this via Lyapunov quantities and second-order Melnikov functions.

\medskip

We consider the vector field family given by:

\begin{equation}\label{eq:lineal}
X_{\mu}:\left\{
\begin{array}{rcl}
\dot x &=& -y \\
\dot y &=& x+y(x^2+y^2-1)(ax+by+c),
\end{array}\right.
\end{equation}
where $\mu=(a,b,c)\in \R^3$. We prove that the algebraic curve $\mathcal C(x,y)=x^2+y^2-1$ is invariant for the system, as well as an algebraic limit cycle. We prove that the limit cycle is unique and can be obtained as a small-amplitude limit cycle that bifurcate from a nonhyperbolic focus at the origin. We begin with the following:

\begin{lemma}
For all $\mu \in \R^3$, the circle $\mathcal C=0$ is an algebraic invariant curve of~\eqref{eq:lineal}.
\end{lemma}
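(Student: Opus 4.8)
The plan is to verify directly that $\mathcal{C}(x,y)=x^2+y^2-1$ satisfies the invariance condition $\dot{\mathcal{C}} = \mathcal{C}\,K$ for an explicit polynomial cofactor $K$. Recall that for a curve $\mathcal{C}=0$ to be invariant under the flow of $X_\mu$, it suffices to exhibit a polynomial $K(x,y)$ such that
\begin{equation*}
\frac{\partial \mathcal{C}}{\partial x}\,\dot x + \frac{\partial \mathcal{C}}{\partial y}\,\dot y = \mathcal{C}\,K.
\end{equation*}

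First I would compute the left-hand side. We have $\partial_x \mathcal{C} = 2x$ and $\partial_y \mathcal{C} = 2y$, so using $\dot x = -y$ and $\dot y = x + y(x^2+y^2-1)(ax+by+c)$,
\begin{equation*}
\dot{\mathcal{C}} = 2x(-y) + 2y\bigl(x + y(x^2+y^2-1)(ax+by+c)\bigr) = 2y^2(x^2+y^2-1)(ax+by+c).
\end{equation*}
The terms $-2xy$ and $2xy$ cancel, which is the key simplification coming from the Kukles structure $\dot x = -y$. Then I would simply read off
\begin{equation*}
\dot{\mathcal{C}} = (x^2+y^2-1)\cdot 2y^2(ax+by+c) = \mathcal{C}\cdot K, \qquad K(x,y) = 2y^2(ax+by+c),
\end{equation*}
which is a polynomial, so $\mathcal{C}=0$ is an invariant algebraic curve for every $\mu=(a,b,c)\in\R^3$.

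There is essentially no obstacle here: the proof is a one-line computation, and the only thing to be careful about is the cancellation of the cross terms $\pm 2xy$, which is exactly why the factor $(x^2+y^2-1)$ survives cleanly without needing $K$ to be rational. I would present the computation in a single displayed equation and conclude. (As an aside, one could also note that $\mathcal{C}$ is not just invariant but, since $K$ does not vanish identically on $\mathcal{C}=0$ and the circle is a periodic orbit of the flow, it is an algebraic limit cycle — but that stronger statement presumably belongs to a later result, so for this lemma I would stop at exhibiting the cofactor.)
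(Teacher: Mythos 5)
Your computation is correct and follows exactly the paper's own argument: differentiate $\mathcal{C}$ along the flow and read off the polynomial cofactor $K(x,y)=2y^2(ax+by+c)$, so that $\dot{\mathcal{C}}=\mathcal{C}K$. The only difference is that you display the cancellation of the $\pm 2xy$ terms explicitly, which the paper leaves implicit.
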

\begin{proof}
We note that, for all $\mu \in \R^3$, the flow along the curve satisfies
$$\dot {\mathcal C}= \ds\frac{\partial \mathcal C}{\partial x}\dot x+\ds\frac{\partial \mathcal C}{\partial y}\dot y=\mathcal C(x,y)2y^2(ax+by+c).$$

Therefore, $\mathcal C=0$ is an invariant circle.
\end{proof}

Note that the curve $\mathcal C=0$ encloses the equilibrium point at the origin. In addition, there are conditions over the parameters of system \eqref{eq:lineal} for which there are no limit cycles inside the open region bounded by $\mathcal C=0$. This is shown in the following result:

\begin{lemma}
If $\mu=(0,0,c)$, with $c\ne 0$, system \eqref{eq:lineal} does not have limit cycles in the region $A=\{ (x,y)\in \R^2: x^2+y^2<1\}$.
\end{lemma}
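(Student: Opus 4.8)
The plan is to exploit the fact that when $\mu=(0,0,c)$ the vector field~\eqref{eq:lineal} becomes
\[
X_{(0,0,c)}:\quad \dot x=-y,\qquad \dot y=x+cy(x^2+y^2-1),
\]
and inside the disc $A=\{x^2+y^2<1\}$ the factor $(x^2+y^2-1)$ is strictly negative. The divergence of this vector field is $\operatorname{div}X=\partial_x(-y)+\partial_y\big(x+cy(x^2+y^2-1)\big)=c\,(x^2+3y^2-1)$, which does not have a fixed sign on $A$, so a direct application of Bendixson's criterion fails and a Dulac function must be found. The natural candidate, in the spirit of the reciprocal integrating factor discussed in Section~\ref{sec:Melnikov}, is to look for a function $B(x,y)$ depending only on $r=x^2+y^2$ such that $\operatorname{div}(B\,X)$ is sign-definite on $A$.

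First I would compute $\operatorname{div}(B X)=\partial_x(B\dot x)+\partial_y(B\dot y)$ for $B=B(r)$. Writing $B'=dB/dr$ and using $\partial_x r=2x$, $\partial_y r=2y$, one gets
\[
\operatorname{div}(BX)=2B'(r)\big(-xy+xy+cy^2(r-1)\big)+B(r)\,c\,(x^2+3y^2-1),
\]
so the $B'$ term collapses to $2cB'(r)\,y^2(r-1)$. The goal is then to choose $B(r)>0$ on $A$ so that
\[
2c\,B'(r)\,y^2(r-1)+c\,B(r)\,(x^2+3y^2-1)
\]
is of one sign on $\{r<1\}$. A clean way to kill the troublesome $x^2-1$ part is to note $x^2+3y^2-1=(r-1)+2y^2$, so the expression equals $c\big[\,2B'(r)\,y^2(r-1)+B(r)(r-1)+2B(r)y^2\,\big]=c\big[(r-1)\big(2B'(r)y^2+B(r)\big)+2B(r)y^2\big]$. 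Choosing $B(r)=(1-r)^{\alpha}$ for a suitable exponent $\alpha>0$ makes $2B'(r)y^2+B(r)=(1-r)^{\alpha-1}\big(1-r-2\alpha y^2\big)$; I would then pick $\alpha$ (for instance $\alpha=\tfrac12$, or treat the general case) so that, after multiplying through, the bracket becomes a manifestly sign-definite multiple of $(1-r)^{\alpha-1}$ times a nonnegative quadratic, giving $\operatorname{div}(BX)$ the sign of $-c$ (or $+c$) throughout $A$, and vanishing only on a measure-zero set. One must also check $B$ is $C^1$ and positive on the open disc, which it is.

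With such a Dulac function in hand, the conclusion is standard: if $\gamma$ were a limit cycle contained in $A$, it would bound a region $\Omega\subset A$, and by Green's theorem $\oint_{\gamma} B(\dot x\,dy-\dot y\,dx)=\iint_{\Omega}\operatorname{div}(BX)\,dx\,dy$; the left side vanishes because $\gamma$ is an orbit, while the right side is strictly nonzero since $\operatorname{div}(BX)$ is sign-definite and not identically zero on the open set $\Omega$ — a contradiction. I would also note the edge case $c\ne 0$ is exactly what makes the divergence nontrivial; if $c=0$ the system is linear with a genuine center and the statement is vacuous for limit cycles anyway. The main obstacle I anticipate is purely computational bookkeeping: identifying the exponent $\alpha$ (or more generally the correct form of $B$) that renders $\operatorname{div}(BX)$ sign-definite on all of $A$ rather than just on a subregion, and verifying carefully that the zero set of $\operatorname{div}(BX)$ inside $A$ has empty interior so the Green's-theorem contradiction is valid.
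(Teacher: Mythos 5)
Your overall strategy---a Dulac function depending only on $r=x^2+y^2$, followed by the Green's-theorem contradiction---is exactly the route the paper takes, and your computation
$\operatorname{div}(BX)=2cB'(r)y^2(r-1)+cB(r)(x^2+3y^2-1)$ is correct. The gap is at the one step you left open, the actual choice of $B$: for $B(r)=(1-r)^{\alpha}$ your own identity gives
\[
\operatorname{div}(BX)=c\,(1-r)^{\alpha}\bigl[x^2+(2\alpha+3)y^2-1\bigr],
\]
and for every $\alpha>0$ (in particular your suggested $\alpha=\tfrac12$) the bracket vanishes on the ellipse $x^2+(2\alpha+3)y^2=1$, which passes through the interior of $A$ (e.g.\ through $(0,1/\sqrt{2\alpha+3})$), and changes sign across it: it is negative at the origin and positive at points $(0,y)$ with $y^2$ close to $1$. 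So no positive exponent makes $\operatorname{div}(BX)$ sign-definite on $A$, and the claimed conclusion ``the divergence has the sign of $-c$ (or $+c$) throughout $A$'' fails; as written the proof does not close.

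The repair stays inside your one-parameter family but needs a negative exponent: for $\alpha\le-1$ one has $x^2+(2\alpha+3)y^2-1\le x^2+y^2-1<0$ on $A$ while $(1-r)^{\alpha}>0$, so $\operatorname{div}(BX)$ is sign-definite there; the cleanest choice is $\alpha=-1$, i.e.\ $B(x,y)=\dfrac{1}{1-x^2-y^2}$, for which $\operatorname{div}(BX)\equiv -c\ne0$ is \emph{constant} on $A$. Up to sign this is precisely the Dulac function $\mathcal D=1/(x^2+y^2-1)$ used in the paper, and Dulac's criterion (your Green's-theorem argument, which only needs $B\in C^1$ on the simply connected region $A$, not $B>0$) then finishes the proof immediately. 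With that substitution your argument coincides with the paper's; without it, the key sign-definiteness claim is false.
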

\begin{proof}
Let $\tilde{\mu}=(0,0,c),\ c\ne 0$. The function
$$
\mathcal D(x,y)=\ds\frac{1}{x^2+y^2-1}
$$
is a Dulac function of $X_{\tilde{\mu}}$ on $A$, which implies that  \mbox{$div(\psi X_{\tilde \mu})=cte$}. From Dulac's  criterion \cite{And2} there are no periodic orbits of $X_{\tilde{\mu}}$ in $A$.
\end{proof}

In what follows we consider $a^2+b^2 \ne 0$. In this case the circle $\mathcal C=0$ is an invariant limit cycle:

\begin{theorem}\label{theo:Melnikov}
Let $c\ne 0$. The curve $\mathcal C=0$ is an algebraic limit cycle of \eqref{eq:lineal}.
\end{theorem}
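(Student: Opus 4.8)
The plan is to show that the invariant circle $\mathcal{C}=0$ is not merely invariant but is itself a periodic orbit of $X_\mu$, and that it is isolated among periodic orbits, hence a limit cycle. First I would verify that $\mathcal{C}=0$ carries no equilibria of \eqref{eq:lineal}: since all singularities of a Kukles system lie on the $x$-axis, the only candidates are $(\pm 1, 0)$, and substituting into $\dot y = x + y(x^2+y^2-1)(ax+by+c)$ gives $\dot y = \pm 1 \neq 0$; so the vector field is nonvanishing on the compact invariant set $\{\mathcal{C}=0\}$. A compact invariant one-dimensional manifold without equilibria, carrying a nonsingular flow, must be a single periodic orbit (by the Poincar\'e--Bendixson theorem, the $\omega$-limit set of any point on the circle is a periodic orbit contained in the circle, which by connectedness and dimension is the whole circle). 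Thus $\mathcal{C}=0$ is a periodic orbit of $X_\mu$.

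Next I would argue that this periodic orbit is isolated. The cofactor computed in the first lemma is $K(x,y) = 2y^2(ax+by+c)$, and the divergence of $X_\mu$ restricted to $\mathcal{C}=0$ governs the characteristic (Floquet) exponent: integrating $\mathrm{div}\,X_\mu$ over the period along the circle, or equivalently integrating the cofactor $K$ weighted appropriately, one finds the stability multiplier. Parametrizing $\mathcal{C}=0$ and using that on the circle the flow reduces (via $\dot x = -y$, $\dot y = x$ there, since the perturbation term vanishes on $\mathcal{C}=0$) to uniform rotation, the integral of $\mathrm{div}\,X_\mu = \partial_x P + \partial_y Q$ over one period becomes $\oint 2y^2(ax+by+c)\,\frac{d\theta}{\cdots}$ type expression; with $a^2+b^2\neq 0$ and $c \neq 0$ this integral is nonzero (the $c$-term contributes $\oint 2y^2 c\, dt \neq 0$ while the linear-in-$x,y$ terms integrate against $y^2$ over the full circle and can be evaluated explicitly), so the circle is a hyperbolic limit cycle. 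In particular it is isolated, hence an algebraic limit cycle.

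An alternative, and perhaps cleaner, route for isolation is to invoke the reciprocal integrating factor / Dulac-type argument: away from $\mathcal{C}=0$ one can look for a Dulac function of the form $\mathcal{D}(x,y) = (x^2+y^2-1)^{-1}\cdot(\text{something})$, mirroring the second lemma, to rule out periodic orbits in a punctured neighborhood of the circle; combined with the fact that $\mathcal{C}=0$ is itself periodic this gives isolation directly. I expect the main obstacle to be the stability/isolation computation: showing the Floquet integral $\oint \mathrm{div}\,X_\mu\,dt$ along $\mathcal{C}=0$ is nonzero under the standing hypotheses $c\neq 0$, $a^2+b^2\neq 0$ requires care in the parametrization and in checking that no cancellation occurs — in particular confirming that the $c$ contribution genuinely dominates or that the full expression cannot vanish identically on the parameter region considered. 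The rest (nonvanishing of the field on the circle, Poincar\'e--Bendixson) is routine.
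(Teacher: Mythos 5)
Your proposal is correct, but it proves the theorem by a genuinely different route than the paper. The paper rescales $a\to\eps a$, $b\to\eps b$, $c\to\eps c$, views \eqref{eq:lineal} as a perturbation of the linear Hamiltonian center $H=\frac{x^2+y^2}{2}$, and computes the first Melnikov function $M_1(h)=2h(2h-1)c\pi$, whose simple zero at $h=\frac12$ is exactly the circle $\mathcal C=0$; the conclusion is then read off from the Melnikov machinery (which, strictly, is a small-$\eps$ statement). You instead work directly on the invariant circle: no equilibria lie on it (the only singularity of \eqref{eq:lineal} is the origin), so by Poincar\'e--Bendixson it is a periodic orbit, and its characteristic exponent is
$$\oint \mathrm{div}\,X_\mu\,dt=\int_0^{2\pi}2\sin^2 t\,(a\cos t+b\sin t+c)\,dt=2\pi c\neq 0,$$
since on $\mathcal C=0$ the divergence reduces to the cofactor $2y^2(ax+by+c)$ and the flow there is uniform rotation. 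The cancellation you were worried about is not an issue: the $a$- and $b$-terms integrate to zero exactly, so the exponent is $2\pi c$ and the hypothesis $a^2+b^2\neq 0$ is not even needed. Your argument buys hyperbolicity of the circle for \emph{all} parameter values with $c\neq 0$ (not just small ones) plus its stability for free ($c<0$ attracting, $c>0$ repelling), and it avoids the perturbative caveat implicit in the paper's rescaling; the paper's route, on the other hand, locates the cycle within the family $H=h$ at $h=\frac12$ and sets up the Melnikov framework that is reused later (e.g.\ for the second-order computation proving uniqueness), which your argument does not provide. The auxiliary Dulac-type alternative you sketch for isolation is unnecessary once hyperbolicity is established.
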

\begin{proof}
We calculate the first Melnikov function for the vector field $X_\mu$. Rescaling parameters by
$$a\to \eps a;\ b\to \eps b ;\ c\to \eps c,$$
and reversing time $t\to -t $, system \eqref{eq:lineal} can be written as
\begin{equation}\label{eq:linealHam}
\begin{array}{rcl}
\dot x &=& y \\
\dot y &=& -x+\eps y(1-x^2-y^2)(ax+by+c).
\end{array}
\end{equation}
System \eqref{eq:linealHam} is a perturbation of the Hamiltonian system
$$
\begin{array}{rcl}
\dot x &=& y \\
\dot y &=& -x,
\end{array}
$$
with Hamiltonian associated function $H(x,y)=\ds\frac{x^2+y^2}{2}$. We parameterize the orbit of the isocronus center $H=h$ by
$$
\begin{array}{rcl}
x(t)&=&\sqrt{2h}\cos t\\
y(t)&=&-\sqrt{2h}\sin t
\end{array}; \ t \in [0,2\pi].
$$
From theorem \ref{teo:LliMel}, the first Melnikov function has the form
$$M_1(h)=2h(2h-1)c\pi .$$
Then, $M_1$ has as simple zero for $h=\ds\frac12$, which corresponds to the curve $\mathcal C=0$. Therefore,  the curve $\mathcal C=0$ in an algebraic limit cycle of \eqref{eq:lineal} 
\end{proof}
The limit cycle obtained above can bifurcate from the origin, as we can see in the following theorems:
\begin{theorem}
\label{theo:cl}
Let $a,b \in \R-\{0\}$. If $c=0$, system \eqref{eq:lineal} has a weak focus of order~$1$ at the origin. This limit cycle is repelling for $ab<0$ and  attracting for $ab>0$.
\end{theorem}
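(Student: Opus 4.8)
The plan is to set $c=0$ in \eqref{eq:lineal} and compute the Lyapunov quantities $L(k)$ of the resulting system at the origin directly from the recursive scheme described in Section~\ref{sec:center}. With $c=0$ the system becomes
\[
\dot x = -y, \qquad \dot y = x + y(x^2+y^2-1)(ax+by),
\]
so that $p(x,y)\equiv 0$ and $q(x,y)=y(x^2+y^2-1)(ax+by)= -axy - by^2 + ax^3y + \text{(higher order)}$. First I would note $L(0)=\lambda=0$, since there is no linear damping term, so the origin is at least a weak focus (or a center). Then I would build the Lyapunov function $V=\tfrac12(x^2+y^2)+V_3+V_4+\cdots$ of \eqref{eq:Lyap}, solving the linear systems $D_k=0$ for the homogeneous pieces $V_k$ in turn, and read off the first nonzero focal value.

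The key computational step is $\eta_4$, i.e.\ $L(1)$. Since $Q$ has a quadratic part $-axy-by^2$ and a quartic-and-higher part $ax^3y+bx^2y^2-\dots$ but no cubic part, the degree-3 equation $D_3=0$ determines $V_3$ uniquely, and then $D_4=\eta_4 r^2$ gives $\eta_4$. I expect that carrying out this computation yields $\eta_4$ proportional to $ab$ with a nonzero numerical constant — plausibly $\eta_4 = -\tfrac{1}{4}ab$ or a similar clean multiple — so that $L(1)$ is a positive constant times $ab$. Since $a,b\in\R\setminus\{0\}$ we have $L(1)\ne 0$, which shows the origin is a weak focus of order exactly $1$. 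The sign statement then follows immediately from the Lyapunov stability criterion quoted in Section~\ref{sec:center}: the origin is attracting when $L(1)<0$, i.e.\ when $ab>0$, and repelling when $L(1)>0$, i.e.\ when $ab<0$. One Hopf bifurcation thus produces a single small-amplitude limit cycle, consistent with Theorem~\ref{theo:Melnikov}.

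As a sanity check I would cross-validate the sign of $\eta_4$ against the divergence computation: $\operatorname{div}X_\mu = \partial_x(-y)+\partial_y\big(x+y(x^2+y^2-1)(ax+by)\big)$, whose value on the unit circle and near the origin should be consistent with the focus being stable/unstable in the way $L(1)$ predicts. Near the origin the relevant quadratic part of the divergence is $-ax-2by$, which has zero average, so the sign is genuinely a higher-order effect — exactly why the first focal value is $\eta_4$ and not $\eta_2$.

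The main obstacle is purely bookkeeping: solving the degree-$3$ and degree-$4$ linear systems for the coefficients of $V_3$ and $V_4$ by hand is error-prone, so in practice I would run the $\eta_{2k}$ recursion in a computer algebra system (as the paper suggests using \emph{Mathematica}) and simply report that the first nonzero Lyapunov quantity is a nonzero constant multiple of $ab$. There is no conceptual difficulty — the statement is a direct corollary of the Lyapunov-quantity machinery of Section~\ref{sec:center} once $\eta_4$ is shown to equal $\mathrm{const}\cdot ab$ with a nonzero, explicitly signed constant.
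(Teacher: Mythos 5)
Your strategy is exactly the paper's: with $c=0$ the linearization has trace $-c=0$, so $L(0)=0$, and the theorem then rests entirely on the explicit value of the first Lyapunov quantity, which the paper states as $L(1)=-\frac{ab}{8}$. The gap in your write-up is that this value --- in particular its sign --- is never actually established: you only say you ``expect'' $\eta_4$ to be a nonzero multiple of $ab$, ``plausibly $-\frac14 ab$'', and defer the computation to a CAS. Since the second assertion of the theorem (repelling for $ab<0$, attracting for $ab>0$) is precisely a statement about the sign of that constant, the proof is incomplete without it. Moreover, your sentence ``so that $L(1)$ is a positive constant times $ab$'' contradicts both your own guess $\eta_4=-\frac14 ab$ and the conclusion you then draw (``attracting when $L(1)<0$, i.e.\ when $ab>0$''), which tacitly assumes a \emph{negative} constant; as written, a reader cannot tell which sign you are actually claiming.

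The computation can in fact be closed by hand, with no recursion and no software. Because $\dot x=-y$ exactly and $q(x,y)=y(x^2+y^2-1)(ax+by)$ has no cubic part (its quartic terms do not enter the first focal value), the classical first-focal-value formula for $\dot x=-y+f$, $\dot y=x+g$ reduces to $16\,L(1)=-g_{xy}(g_{xx}+g_{yy})$ evaluated at the origin, computed from the quadratic part $-axy-by^2$ of $g$: here $g_{xy}=-a$, $g_{xx}=0$, $g_{yy}=-2b$, so $16\,L(1)=-(-a)(-2b)=-2ab$, i.e.\ $L(1)=-\frac{ab}{8}$, matching the paper. With that constant in hand the rest of your argument (order-one weak focus since $ab\ne0$, stability read off from the sign of $L(1)$) is exactly the paper's proof. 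Two minor slips: $-ax-2by$ is the linear, not quadratic, part of the divergence; and your interpretation of the statement's ``limit cycle'' as the stability of the weak focus is the intended one.
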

\begin{proof}
The linear part of \eqref{eq:lineal} is given by:
$$
DX_{\mu}(0,0)\;=\;
\left(\begin{array}{cc}
0 & -1 \\
 1 & -c \\
\end{array}\right).
$$
Note that the origin is a weak focus if $c=0$. To calculate its order, we compute the Lyapunov quantities $L(k),\ k\ge0$ in order. We have $L(0)=c=0$, and 
$$L(1)=-\ds\frac{ab}{8},$$
which proves the theorem.
\end{proof}
\begin{theorem}
\label{theo:cl2}
In parameter space ${\mathbb R}^3$, there exists an open set $\mathcal{N}$ such that, for all $\mu \in \mathcal{N}$, system \eqref{eq:lineal} has at least one small-amplitude limit cycle.
\end{theorem}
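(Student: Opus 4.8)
The plan is to read off $\mathcal N$ from Theorem~\ref{theo:cl} by unfolding its order-one weak focus through an Andronov--Hopf bifurcation. Fix $(a_0,b_0)$ with $a_0b_0\ne0$ and put $\mu_0=(a_0,b_0,0)$; by Theorem~\ref{theo:cl} the origin is a weak focus of order $1$ for $X_{\mu_0}$, with $L(1)=-a_0b_0/8\ne0$, attracting when $a_0b_0>0$ and repelling when $a_0b_0<0$. Now move the third parameter: since $DX_\mu(0,0)$ has trace $-c$ and determinant $1$, the origin is a hyperbolic focus for $0<|c|\ll1$, attracting for $c>0$ and repelling for $c<0$. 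I would choose the sign of $c$ so that this linear stability is \emph{opposite} to the stability of the weak focus at $\mu_0$ --- that is, so that $abc<0$ --- which puts us in the setting of a Hopf bifurcation (supercritical or subcritical according to the sign of $a_0b_0$) that creates a single small-amplitude limit cycle around the origin; this is the bifurcation argument already recalled at the end of Section~\ref{sec:center} (cf.\ \cite{Blows-Lloyd}).

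To turn this into an \emph{open} set I would work with the displacement function of $X_\mu$ along a ray from the origin. Since the origin is a linear focus of $X_\mu$ for $\mu$ near $\mu_0$, this map is analytic in $(r,\mu)$ on $0\le r\ll1$ and factors as
\[
d(r,\mu)=r\bigl(c_0(\mu)+c_1(\mu)\,r^2+\mathcal O(r^4)\bigr),
\]
where $c_0$ vanishes exactly on $\{c=0\}$ and changes sign across it, while $c_1(\mu_0)$ has the sign of $L(1)=-a_0b_0/8$, hence is nonzero; so $c_1(\mu)\ne0$ on a ball $B$ around $\mu_0$. Put $\mathcal N=\{\mu\in B:\ |c|<\delta,\ c_0(\mu)c_1(\mu)<0\}$ for a suitably small $\delta>0$: this is open, and nonempty since it contains $(a_0,b_0,c)$ for every small $c$ with $a_0b_0c<0$. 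For $\mu\in\mathcal N$ the bracketed factor has a simple positive zero $r_\ast(\mu)=\sqrt{-c_0(\mu)/c_1(\mu)}\,(1+\mathcal O(c_0(\mu)))$, unique for $r$ small by the implicit function theorem, with $r_\ast(\mu)\to0$ as $c\to0$. A simple positive zero of $d(\cdot,\mu)$ is a hyperbolic limit cycle of $X_\mu$, so $\{r=r_\ast(\mu)\}$ is the required small-amplitude limit cycle of \eqref{eq:lineal}.

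I do not expect a single decisive obstacle --- the statement is soft --- but two points need care. One must take $\delta$ small relative to $\inf_{\mu\in B}|c_1(\mu)|$ so that the remainder $\mathcal O(r^4)$ cannot spoil the simplicity of $r_\ast$ (i.e.\ so that $\partial_r d(\cdot,\mu)\ne0$ near $r_\ast$), and one should note that $r_\ast(\mu)<1$ for $\delta$ small, so that the bifurcating cycle lies strictly inside the disk bounded by the invariant circle $\mathcal C=0$ and is genuinely new rather than the circle of Theorem~\ref{theo:Melnikov}. Beyond these, the only content is that ``$ab\ne0$'' is an open condition preserved under small perturbations of $(a,b)$, so no Lyapunov quantity beyond the $L(1)$ of Theorem~\ref{theo:cl} must be computed; one may finally take for $\mathcal N$ the union of such neighborhoods over all admissible base points $\mu_0=(a_0,b_0,0)$.
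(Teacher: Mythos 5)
Your proof is correct and follows essentially the same route as the paper: unfold the order-one weak focus of Theorem~\ref{theo:cl} by switching on $c$ with sign chosen against the stability of the weak focus, producing a small-amplitude limit cycle via a Hopf bifurcation, with your displacement-function factorization merely making the openness of $\mathcal{N}$ explicit (the paper leaves this implicit). The only divergence is sign bookkeeping: your condition $abc<0$, based on the trace $-c$ of $DX_\mu(0,0)$, is the consistent one, whereas the paper's proof pairs $ab<0$ with $c<0$ (in line with its stated $L(0)=c$ but not with the displayed Jacobian), so your version is, if anything, the corrected form of the same argument.
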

\begin{proof}
From theorem \ref{theo:cl}, system \eqref{eq:lineal} has a weak focus of order 1 at the origin for $\tilde\mu =(a,b,0)$, with $ab\ne 0$. If $ab<0$ the weak focus is repelling, then we can perturb the system  in such a way that $c<0$, changing the stability of the origin and thus creating a new repelling limit cycle via a Hopf bifurcation. Similarly, if $ab>0$, the weak focus is attracting and we can perturb the system with $c>0$, creating an attracting limit cycle.
\end{proof}
We can also show center conditions in system \eqref{eq:lineal}:
\begin{theorem}
System \eqref{eq:lineal} has a center at the origin if and only if $c=a=0$ or $c=b=0$.
\end{theorem}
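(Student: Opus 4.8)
\emph{Overall strategy.} I would split the equivalence into necessity (driven by the Lyapunov quantities already computed) and sufficiency (driven by a reflection--time-reversal symmetry), the point being that once the first two Lyapunov quantities are pinned down, the two surviving parameter sets turn out to be exactly the two reversible subfamilies of \eqref{eq:lineal}.

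\emph{Necessity.} The trace of $DX_\mu(0,0)$ is $-c$, so $L(0)=c$ and a center at the origin forces $c=0$. Under $c=0$ the origin is a weak focus or a center, and Theorem~\ref{theo:cl} gives $L(1)=-ab/8$; hence a center additionally forces $ab=0$, i.e. $a=0$ or $b=0$. Thus the only possibilities are $c=a=0$ and $c=b=0$. Note that this already exhausts the analysis: since both of these sets will be shown to give centers, all higher Lyapunov quantities vanish automatically on them and need not be computed.

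\emph{Sufficiency.} Here I would use the classical fact that a monodromic singularity whose phase portrait is invariant under a reflection composed with time reversal is a center. When $c=0$ the linear part of \eqref{eq:lineal} is a rotation, so the origin is monodromic. In the degenerate cases the angular factor $(ax+by+c)$ collapses to a single monomial, which produces the required symmetry. If $c=a=0$, then \eqref{eq:lineal} is $\dot x=-y$, $\dot y=x+by^2(x^2+y^2-1)$: the first component is odd in $y$ and the second is even in $y$, so the system is invariant under $(x,y,t)\mapsto(x,-y,-t)$ and its phase portrait is symmetric about the $x$-axis; hence the origin is a center. If $c=b=0$, then \eqref{eq:lineal} is $\dot x=-y$, $\dot y=x+axy(x^2+y^2-1)$: the first component is even in $x$ and the second is odd in $x$ (indeed $Q(-x,y)=-Q(x,y)$), so the system is invariant under $(x,y,t)\mapsto(-x,y,-t)$ and its phase portrait is symmetric about the $y$-axis; again the origin is a center.

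\emph{Main obstacle.} There is no genuinely hard step: the only points requiring care are the routine substitution verifying that each reflection maps orbits to time-reversed orbits, and invoking monodromicity (guaranteed by $c=0$) so that the mirrored half-orbits actually close up into periodic orbits. For the case $c=a=0$ one could instead search for an explicit first integral or inverse integrating factor, but the reversibility argument is the most economical and handles both cases uniformly.
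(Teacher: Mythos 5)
Your proof is correct and follows essentially the same route as the paper's: necessity from the Lyapunov quantities $L(0)$ and $L(1)$, and sufficiency from exactly the same reflection--time-reversal symmetries of the two reduced systems (the paper states them as $p(x,-y)=-p(x,y)$, $q(x,-y)=q(x,y)$ and $p(-x,y)=p(x,y)$, $q(-x,y)=-q(x,y)$). Your explicit mention of monodromicity of the origin when $c=0$ is a small clarifying addition, not a different method.
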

\begin{proof}
From theorems \ref{theo:cl} and \ref{theo:cl2}, for $c=0$ the Lyapunov quatities are $\ L(0)=0$ and $L(1)=-\ds\frac{ab}{8}$. If $a=0$ or $b=0,\ L(1)=L(2)=L(k)=0, \ \forall k\ge0$ and we have necessary center conditions. On the other hand, for $c=a=0;\ b\ne 0$, system \eqref{eq:lineal} has the form:
$$
\begin{array}{rcl}
\dot x &=& p(x,y)\ = \ -y, \\
\dot y &=& q(x,y) \ =\ x+b(-y^2+x^2y^2+y^4).
\end{array}
$$
In this case we have the symmetries $p(x,-y)=-p(x,y)$ and $q(x,-y)=q(x,y)$, which prove that $X_\mu$ has a center at the origin if $c=a=0$.\\
Analogously, if $c=b=0;\ a\ne 0$, the vector field has the form
$$
\begin{array}{rcl}
\dot x &=&p(x,y)\ =\ - y, \\
\dot y &=& q(x,y)\ =\ x+a(-xy+x^3y+xy^3),
\end{array}
$$
and the symmetries $p(-x,y)=p(x,y);\ q(-x,y)=-q(x,y)$. Therefore, the origin is a center of $X_\mu$ for $c=b=0$.\\
This proves that the conditions are also sufficient.
\end{proof}
When the parameter $c$ vanishes in \eqref{eq:lineal}, the information obtain from the first Melnikov function is lost, since $M_1\equiv 0$ for $c=0$. However, following the method from \cite{Fran} and \cite{Il}, it is possible to compute the second Melnikov function. This is shown in the following result:
\begin{theorem}
If $ab\ne 0$, system \eqref{eq:lineal} has exactly one limit cycle.
\end{theorem}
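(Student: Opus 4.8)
The plan is to prove existence and uniqueness separately. For existence, note that on $\mathcal C=0$ the field \eqref{eq:lineal} restricts to $\dot x=-y$, $\dot y=x$, so the unit circle is a periodic orbit, invariant for every $\mu$; when $c\ne0$ it is a limit cycle by Theorem~\ref{theo:Melnikov}, and the case $c=0$ is handled by the computation below. For uniqueness I would work in the perturbative picture from the proof of Theorem~\ref{theo:Melnikov}: after $a\mapsto\eps a$, $b\mapsto\eps b$, $c\mapsto\eps c$ and $t\mapsto-t$, system \eqref{eq:lineal} becomes the perturbation \eqref{eq:linealHam} of the isochronous linear center with $H=\frac12(x^2+y^2)$, perturbation terms $f_1\equiv0$, $g_1=y(1-x^2-y^2)(ax+by+c)$, and $f_k=g_k=0$ for $k\ge2$. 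A key observation is that $g_1$ vanishes identically on $x^2+y^2=1$, so that circle stays a periodic orbit for all $\eps$; hence the closed orbit that can bifurcate from the level $H=\frac12$ is exactly $\mathcal C=0$, and any count for \eqref{eq:linealHam} giving ``at most one'' forces that one to be $\mathcal C=0$.

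If $c\ne0$, the proof of Theorem~\ref{theo:Melnikov} already gives $M_1(h)=2h(2h-1)c\pi$, whose unique positive zero $h=\frac12$ is simple; by Theorem~\ref{theo:teoHan}, equivalently by the Poincar\'e--Giacomini--Llibre--Viano bound on the number of simple zeros of the first nonzero Melnikov function, at most one limit cycle bifurcates from the period annulus for $\eps$ small, and small-amplitude cycles are excluded because $L(0)=c\ne0$ makes the origin hyperbolic. With existence, this gives exactly one. If $c=0$, then $M_1\equiv0$ and one must pass to the second Melnikov function, following Fran\c{c}oise \cite{Fran} and Iliev \cite{Il}. Concretely: write the perturbation one-form $\omega=g_1\,dx=y(1-x^2-y^2)(ax+by)\,dx$; since $H=\frac12(x^2+y^2)$ has connected ovals and $\oint_{H=h}\omega\equiv0$, solve $g_1=F_x+Rx$, $0=F_y+Ry$ to write $\omega=dF+R\,dH$ with $F,R$ polynomial; then $M_2(h)=\oint_{H=h}R\,\omega$, which I would evaluate on $x=\sqrt{2h}\cos t$, $y=-\sqrt{2h}\sin t$. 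I expect the outcome to be $M_2(h)=\kappa\,ab\,h^{m}(2h-1)$ for a nonzero constant $\kappa$ and some $m\ge1$: the factor $2h-1$ is forced because $\omega$, hence every term of the recursion, vanishes on $x^2+y^2=1$, and the factor $ab$ mirrors $L(1)=-\frac{ab}{8}$. Then for $ab\ne0$ we get $M_2\not\equiv0$ with $h=\frac12$ its only, and simple, positive zero, so again at most one limit cycle bifurcates from the annulus — small-amplitude ones excluded since $L(1)\ne0$ — and with existence this is exactly one.

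The main obstacle is the $c=0$ computation: carrying out the Fran\c{c}oise decomposition $\omega=dF+R\,dH$ and the ensuing integral, and — more than the bookkeeping — verifying that $M_2$ is not identically zero and has no positive zero besides the simple one at $h=\frac12$, so that nothing extra can bifurcate. A secondary point deserving care is that ``at most one limit cycle bifurcating from the period annulus for small $\eps$'' really yields the stated uniqueness: this uses that $H$ is proper with the whole punctured plane as its period annulus, that \eqref{eq:linealHam} has bounded degree so no limit cycle escapes toward infinity for small $\eps$, and that for $c=0$ the origin is a weak focus of finite order ($L(1)\ne0$), ruling out small-amplitude cycles near it.
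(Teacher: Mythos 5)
Your strategy is the same as the paper's: dispose of $c\ne0$ with $M_1(h)=2h(2h-1)c\pi$, and for $c=0$ pass to the second-order Melnikov function in the Fran\c{c}oise--Iliev sense. The genuine gap is that the decisive step --- actually computing $M_2$ and verifying that its only positive zero is the simple zero $h=\tfrac12$ --- is precisely what you defer to ``the main obstacle,'' and that computation is essentially the entire content of the paper's proof. Moreover, your conjectured closed form $M_2(h)=\kappa\,ab\,h^{m}(2h-1)$ is not what comes out: the paper obtains $M_2(h)=\tfrac13\,ab\,h(2h-1)(6-7h+28h^2)\pi$. Your structural arguments are sound as far as they go --- the invariance of $\mathcal C=0$ for all $\eps$ does force the factor $(2h-1)$, and the factor $ab$ does mirror $L(1)=-\tfrac{ab}{8}$ --- but nothing you say forces the remaining factor to be a constant times a power of $h$; it is in fact the quadratic $6-7h+28h^2$, and uniqueness only follows after the additional check that this quadratic has negative discriminant ($49-4\cdot28\cdot6<0$), hence no real roots. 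Without producing $M_2$ and performing that check, the claim that no second large-amplitude cycle bifurcates when $c=0$ is unproved, so the proposal identifies the right route but does not yet constitute a proof of the theorem.

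A secondary remark: your caution about upgrading ``at most one limit cycle bifurcates from the period annulus for $\eps$ small'' to the theorem as stated for all $(a,b,c)$ with $ab\ne0$ is well taken, but the paper's own proof is equally perturbative (it works after the rescaling $a\to\eps a$, $b\to\eps b$, $c\to\eps c$ used in Theorem~\ref{theo:Melnikov}) and does not address this either; relative to the paper, the missing ingredient in your write-up is the explicit second-order Melnikov computation and the verification that $h=\tfrac12$ is its unique positive zero.
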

\begin{proof}
From theorem \ref{theo:cl}, for $c=0$ there exists a limit cycle inside the region bounded by
$ \mathcal C=0$. For small values of $h$, the second Melnikov function can be represented by:
$$
M_2(h)=\ds\int_{H=h}\left[ G_{1h}(x,y)P_2(x,h)-G_1(x,y)P_{2h}(x,y) \right]dx,
$$
where
$$
\begin{array}{rcl}
G_{1h}(x,y)&=&\ds\frac{ax-ax^3-3axy^2}{y}\\[0,2cm]
P_2(x,h)&=&b2hx-4bh^2x-b\ds\frac{x^3}{3}+\ds\frac23bhx^3\\[0,2cm]
G_1(x,y)&=&-axy-ax^3y-axy^3\\[0,3cm]
P_{2h}(x,y)&=&\ds\frac{6bx+2bx^3-24bxy}{3y}
\end{array}
$$
then $M_2(h)=\ds\frac13abh(2h-1)(6-7h+28h^2)\pi$. The closed orbit associated to $h=\ds\frac12$ is still a limit cycle. Furthermore, the quadratic polynomial $6-7h+28h^2$ does ot have real roots; this holds for all $a,b \in \R-\{0\}$. Then, the limit cycle is unique.
\end{proof}

\begin{remark}
It is expected that the Melnikov function up to some order provides the total number of limit cycles, but there are cases when the Melnikov function does not control the limit cycles of small amplitude. It is worth asking about the relation between small- and large-amplitude limit cycles, or equivalently about 
the relation (if exists) between the coefficients of the Melnikov functions and the Lyapunov quantities. There is no general answer to this question, and we think an answer may be of help on tackling Hilbert's 16th problem. In \cite{Han2} the authors give a partial answer to this question, about which we mention in next section.
\end{remark}
\section{Coexistence of limit cycles with an invariant circle in a family of Kukles systems with arbitrary odd degree}
\label{sec:odd}

In this section we study the coexistence of limit cycles with an invariant circle in a family of Kukles systems of arbitrary odd degree. We study Lyapunov quantities and the first Melnikov function, and show a new concrete example, different from the one given in \cite{Han}, in which theorem \ref{theo:teoHan}  does not apply, i.e. it is not possible to stablish the cyclicity of the system with only the coefficients of the first Melnikov function.
\medskip

We consider the following family of Kukles systems:

\begin{equation}\label{eq:impar}
X=\left\{ \begin{array}{rcl}
\dot x &=&-y,\\
\dot y &=&x+y(1-x^2-y^2)(b_{00}+\displaystyle\sum_{i+j=1}^{n}b_ {2i2j}x^{2i}y^{2j}),
\end{array}\right.
\end{equation}
where $b_{2i 2j} \in \R$ and $i,j$ are non-negative integer numbers. This family of planar vector fields of degree $2n+3$ has the invariant circle $C(x,y)=0$,  with $$C(x,y)=1-x^2-y^2,$$
as we show in the following lemma:
\begin{lemma}
The circle $C=0$ is an algebraic invariant curce of \eqref{eq:impar}.
\end{lemma}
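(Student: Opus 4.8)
The plan is to mimic exactly the computation in the degree-four case. I would compute the rate of change of $C(x,y) = 1 - x^2 - y^2$ along the flow of \eqref{eq:impar} and verify that it is divisible by $C$ itself, exhibiting the cofactor polynomial $K$ explicitly. Writing $f(x,y) = b_{00} + \sum_{i+j=1}^{n} b_{2i2j}x^{2i}y^{2j}$ so that the system reads $\dot x = -y$, $\dot y = x + y(1-x^2-y^2)f(x,y) = x + yC(x,y)f(x,y)$, I compute
$$
\dot C \;=\; \frac{\partial C}{\partial x}\dot x + \frac{\partial C}{\partial y}\dot y \;=\; (-2x)(-y) + (-2y)\bigl(x + yC(x,y)f(x,y)\bigr) \;=\; 2xy - 2xy - 2y^2 C(x,y)f(x,y).
$$

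Thus $\dot C = -2y^2 f(x,y)\,C(x,y) = C(x,y)\,K(x,y)$ with cofactor $K(x,y) = -2y^2 f(x,y) = -2y^2\bigl(b_{00} + \sum_{i+j=1}^{n} b_{2i2j}x^{2i}y^{2j}\bigr)$, which is a polynomial. Since $\dot C$ is a polynomial multiple of $C$, the curve $C = 0$ is invariant for \eqref{eq:impar} in the sense defined in the introduction (there exists a polynomial $K$ with $\dot C = CK$), and this holds for all choices of the coefficients $b_{2i2j} \in \R$.

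There is essentially no obstacle here: the key structural feature is that the factor $(1-x^2-y^2) = C(x,y)$ already appears explicitly in $\dot y$, and the term it multiplies is itself a multiple of $\dot x = -y$, so the two contributions $\frac{\partial C}{\partial x}\dot x$ and the "bad" part of $\frac{\partial C}{\partial y}\dot y$ either cancel or are already proportional to $C$. The only thing to be careful about is bookkeeping of signs and the observation that the cofactor $-2y^2 f(x,y)$ is genuinely polynomial (which is immediate since $f$ is polynomial). I would present this as a two-line display followed by the sentence identifying the cofactor, exactly paralleling the proof of the analogous lemma in Section \ref{sec:kukcir}.
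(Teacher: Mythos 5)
Your computation is correct and is essentially identical to the paper's own proof: both verify directly that $\dot C = C\,(-2y^2)\bigl(b_{00}+\sum_{i+j=1}^{n}b_{2i2j}x^{2i}y^{2j}\bigr)$, exhibiting the same polynomial cofactor. Nothing further is needed.
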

\begin{proof}
A straightforward calculation shows that
$$\displaystyle\frac{\partial C}{\partial x}\dot x+\displaystyle\frac{\partial C}{\partial y}\dot y=C(x,y)(-2y^2)(b_{00}+\displaystyle\sum_{i+j=1}^{n}b_{2i\;2j}x^{2i}y^{2j}),
$$
which implies that $C$ is invariant.
\end{proof}
The origin is the only singularity of system \eqref{eq:impar}, and it is a \emph{center-focus} for $b_{00}=0$. This can be easily checked by looking at the linearization  of the system, given by
$$
DX(0,0)\;=\;
\left(\begin{array}{cc}
0 & -1 \\
 1 & b_{00} \\
\end{array}\right)
$$

We study the coexistence of limit cycles of  \eqref{eq:impar} with the invariant curve $C=0$. To this end, we calculate the first Melnikov function. We have the following theorem:
\begin{theorem}
\label{theo:Melimpar}
For all $n \in \N$, system \eqref{eq:impar} has at most $n+1$ limit cycles of large amplitude.
\end{theorem}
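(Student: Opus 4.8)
The plan is to realize \eqref{eq:impar} as an analytic perturbation of a harmonic oscillator and to compute its first Melnikov function in closed form. First I would rescale the parameters by $b_{00}\to\eps b_{00}$ and $b_{2i2j}\to\eps b_{2i2j}$ and reverse time, $t\to -t$, so that \eqref{eq:impar} becomes a system of the form \eqref{eq:Pert} with Hamiltonian $H(x,y)=\ds\frac{x^2+y^2}{2}$, with $f_1\equiv 0$, with
$$g_1(x,y)\,=\,-\,y\,(1-x^2-y^2)\Big(b_{00}+\sum_{i+j=1}^{n}b_{2i2j}x^{2i}y^{2j}\Big),$$
and with $f_k\equiv g_k\equiv 0$ for $k\ge 2$. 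The unperturbed system is the isochronous center whose orbits $\{H=h\}$ are the circles $x^2+y^2=2h$; parameterizing them, as in the proof of Theorem \ref{theo:Melnikov}, by $x(t)=\sqrt{2h}\cos t$, $y(t)=-\sqrt{2h}\sin t$, $t\in[0,2\pi]$, the large-amplitude limit cycles of \eqref{eq:impar} are exactly those bifurcating from this period annulus.

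Next I would evaluate the Poincar\'e--Melnikov integral of Theorem \ref{teo:LliMel}(i) along $\{H=h\}$. Since $g_1$ is odd in $y$ and even in $x$, after substitution every monomial carries even powers of both $\cos t$ and $\sin t$, so that all the relevant angular integrals $I_{ij}:=\int_0^{2\pi}\cos^{2i}t\,\sin^{2j+2}t\,dt$ are strictly positive (Wallis-type) constants and $\int_0^{2\pi}\sin^2 t\,dt=\pi$. Using the homogeneity of the monomial $x^{2i}y^{2j}$ to factor out $(2h)^{i+j}$ and collecting terms according to the total degree $k=i+j$, one obtains, up to an overall constant sign,
$$M_1(h)\,=\,2h\,(2h-1)\,Q(h),\qquad Q(h)=\pi b_{00}+\sum_{k=1}^{n}\gamma_k h^{k},\quad \gamma_k=2^{k}\!\!\sum_{i+j=k}\!\!b_{2i2j}\,I_{ij}.$$
In particular $M_1(h)/h$ is a polynomial in $h$ of degree at most $n+1$, consistent with Theorem \ref{theo:teoHan}, and $h=\ds\frac12$ --- that is, the invariant circle $C=0$ --- is always a zero of $M_1$.

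Finally I would count the positive zeros. The factor $Q$ is a polynomial of degree at most $n$, hence has at most $n$ zeros in $(0,\infty)$; together with the zero at $h=\ds\frac12$ coming from the factor $(2h-1)$, $M_1$ has at most $n+1$ zeros in $(0,\infty)$, counted with multiplicity. By the standard principle relating the zeros of the first nonzero Melnikov function to bifurcating limit cycles (the Poincar\'e result recalled above, or the cyclicity bound in Theorem \ref{theo:teoHan}), it follows that at most $n+1$ large-amplitude limit cycles of \eqref{eq:impar} bifurcate from the period annulus of the harmonic oscillator, which is the claim.

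The computation itself is routine; the points that need care are the rescaling that puts \eqref{eq:impar} into the perturbative form \eqref{eq:Pert}, the verification that the odd angular contributions vanish so that $M_1/h$ is genuinely a polynomial of the asserted degree, and the passage from ``number of zeros of $M_1$ with multiplicity'' to ``number of limit cycles'' (a Weierstrass-preparation / Rolle-type argument, valid when $M_1\not\equiv 0$). The degenerate case $M_1\equiv 0$ lies outside this first-order bound and would require higher-order Melnikov functions, in the spirit of the remark above; this is also the phenomenon that makes the concrete example in this section delicate.
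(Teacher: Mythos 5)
Your proposal is correct and follows essentially the same route as the paper: rescaling the parameters by $\eps$ and reversing time to view \eqref{eq:impar} as a perturbation of the harmonic oscillator $H=\frac{x^2+y^2}{2}$, computing $M_1$ along $x=\sqrt{2h}\cos t$, $y=-\sqrt{2h}\sin t$ via Theorem \ref{teo:LliMel}(i), obtaining the factorization $M_1(h)=2h(2h-1)\bigl(\pi b_{00}+\sum_{k=1}^{n}\gamma_k h^k\bigr)$ with positive Wallis-type constants, and bounding the positive zeros by $n+1$ after discarding $h=0$. Your explicit remarks on the zero at $h=\frac12$ (the invariant circle) and on the degenerate case $M_1\equiv0$ are consistent with, and slightly more careful than, the paper's own presentation.
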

\begin{proof}
We write system \eqref{eq:impar} as the perturbation of a Hamiltonian system.  The parameter rescaling $b_{2i2j}\to\epsilon \cdot b_{2i,2j}$, for $\epsilon>0$, gives the vector field
\begin{equation}\label{eq:imparpert}
X_{\epsilon}=\left\{ \begin{array}{rcl}
\dot x &=&-y,\\
\dot y &=&x+\epsilon y(1-x^2-y^2)(b_{00}+ \displaystyle\sum_{i+j=1}^{n}b_{2i\;2j}x^{2i}y^{2j}).
\end{array}\right.
\end{equation}
Note that the unperturbed system $(\epsilon =0)$ is Hamiltonian:
\begin{equation}\label{eq:imparHam}
X_{0}=\left\{ \begin{array}{rcl}
\dot x &=&-y,\\
\dot y &=&x,
\end{array}\right.
\end{equation}
with associated Hamiltonian function $H(x,y)=\ds\frac{x^2+y^2}{2}$. Reverting the direction of the flow via the time rescaling $t\to -t$, we finally write the system as
$$
X_{\epsilon}=\left\{ \begin{array}{rcl}
\dot x &=&\displaystyle\frac{\partial H}{\partial y},\\[0,4cm]
\dot y &=&-\displaystyle\frac{\partial H}{\partial x}+\epsilon  g_1(x,y),
\end{array}\right.
$$
where $$g_1(x,y)=y(x^2+y^2-1)(b_{00}+ \displaystyle\sum_{i+j=0}^{n}b_{2i\;2j}x^{2i}y^{2j}).$$
We now consider the period $T=2\pi$ of the orbit $H(x,y)=h$ of the unperturbed system.  We parameterize the orbit by
$$
\begin{array}{rcl}
x(t)&=&\sqrt{2h}\cos (t),\\[0,3cm]
y(t)&=&-\sqrt{2h}\sin (t),
\end{array}
$$
with $h>0$. From theorem \ref{teo:LliMel}, the first Melnikov function has the form
$$
\begin{array}{rcl}
M_1(h)&=&\displaystyle\int_{0}^{T}y(t)g_1(x(t),y(t))dt\\[0,5cm]
&=&2h(2h-1) \left(\pi b_{00}+\displaystyle\sum_{i+j=1}^{n}b_{2i,2j}(2h)^{i+j}\displaystyle\int_{0}^{2\pi}\cos^{2i}(t) \sin^{2j+2}(t)dt\right).
\end{array}
$$
Note that
$$
\displaystyle\int_{0}^{2\pi}\cos^{2i}(t) \sin^{2j+2}(t)dt=4\displaystyle\int_{0}^{\frac\pi 2}\cos^{2i}(t) \sin^{2j+2}(t)dt,
$$
and for $r,s>0$
$$
\displaystyle\int_{0}^{\frac\pi 2}\cos^{2r-1}(t) \sin^{2s-1}(t)dt=\displaystyle\frac{\Gamma(r)\Gamma(s)}{2\Gamma(r+s)},
$$
where $\Gamma(\cdot)$ corresponds to the \emph{Gamma} function. Then
$$
\begin{array}{rcl}
M_1(h)&=&h(2h-1)\left(2b_{00}+ \displaystyle\sum_{k=1}^n\sum_{i+j=k}\displaystyle\frac{(2i)!(2j)!(2j+1)}{2^{i+j-1}i!j!(i+j+1)!}b_{2i 2j}h^{i+j}\right)\pi.
\end{array}
$$
This polynomial has degree $n+2$.\\
Now note that $h=\frac12$ is a simple zero of $V_0(h)$, i.e. the orbit $H(x,y)=\frac12$ is a limit cycle or equivalently  the curve $C=0$ is a limit cycle. The simple zero $h=0$ does not provide any limit cycle since the orbit $H=0$ reduces to the origin. Therefore, system \eqref{eq:impar} has at most  $n+1$ large-amplitude limit cycles.
\end{proof}
\medskip

\begin{example}
\label{ex:ejem}
System \eqref{eq:impar} with $n=3$ gives:
\begin{equation}\label{eq:ejem}
X=\left\{ \begin{array}{rcl}
\dot x &=&-y,\\[0,1cm]
\dot y &=&x+y(1-x^2-y^2)(b_{00}+b_{20}x^2+b_{02}y^2+b_{40}x^4+b_{22}x^2y^2+\\[0,1cm]&&
b_{04}y^4+b_{60}x^6+b_{42}x^4y^2+b_{24}x^2y^4+b_{06}y^6),
\end{array}\right.
\end{equation}
which has degree 9. Using theorem \ref{theo:Melimpar}, the first Melnikov function has the form
$$
\begin{array}{rcl}
M_1(h)&=&\displaystyle\frac \pi4h(2h-1)(8b_{00}+4(3b_{02}+b_{20})h+4(5b_{04}+b_{22}+b_{40})h^2+\\[0,3cm]&&
(35b_{06}+5b_{24}+3b_{42}+5b_{60})h^3).
\end{array}
$$
From here, at most 4 limit cycles can bifurcate from the Hamiltonian center, including the invariant circle $C=0$.\\

Using the software \emph{Mathematica} we calculate the Lyapunov quantities $L(k)$, $k\ge 0$. We have:
$$L(0)=b_{00},$$
and if  $b_{00}=0,\ L(0)=0$. Then
$$L(1)=\displaystyle\frac{3b_{02}+b_{20}}{8}.$$
If $b_{20}=-3b_{02},\ L(1)=0$, then we have
$$L(2)=\displaystyle\frac{5b_{04}+b_{22}+b_{40}}{16},$$
and if $b_{22}=-5b_{04}-b_{40},\ L(2)=0$, then
$$L(3)=\displaystyle\frac{-6b_{02}^3+35b_{06}+5b_{24}+3b_{42}+5b_{60}}{128}.$$
We assume the condition 

\begin{equation}
5b_{24}+3b_{42}+5b_{60}\ne0,
\label{eq:ast}
\end{equation}
so that the higher order term in $M_1$ is not zero. If we set
\mbox{$b_{42}=\displaystyle\frac{6b_{02}^3-35b_{06}-5b_{24}-5b_{60}}{3}$}, $L(3)=0$, we have
$$L(4)=\displaystyle\frac{3}{128}b_{02}^2(2b_{02}-b_{04}+b_{40}).$$
In addition, if $b_{04}=2b_{02}+b_{40}$ we have that $L(4)=0$. It follows that
$$
L(5)=\displaystyle\frac{b_{02}(144b_{02}^2+9b_{02}b_{24}+204b_{02}b_{40}+80b_{40}^2+21b_{02}b_{60})}{1536},$$
Now, if $b_{40}=0$, we can write
$$\ L(5)=\ds\frac{3}{1536}b_{02}^2(48b_{02}+3b_{24}+7b_{60}).$$
From here, setting $b_{24}=-\ds\frac13(48b_{02}+7b_{60})$, we have $L(5)=0$ and then
$$
L(6)=-2b_{02}^2(246b_{02}+9b_{02}^3-183b_{06}-5b_{60}).
$$
For $b_{60}=\ds\frac35(82b_{02}+3b_{02}^2-61b_{06})$, it follows that $L(6)=0$ and we can write
$$
L(7)=\ds\frac{18}{25}b_{02}(749152b_{02}^2+77586b_{02}^4+1737b_{02}^6-
785952b_{02}b_{06}-45918b_{02}^3b_{06}+171488b_{06}^2)
$$
For this set of conditions upon the parameters, condition \eqref{eq:ast} is equivalent to $b_{02}\ne0$. If $b_{02}=0$, system \eqref{eq:ejem} has a center at the origin. Let's consider $b_{02} \ne 0$ and define functions $\xi$ and $\zeta$ defined over an open interval in $\mathbb R$ by
$$
\begin{array}{rcl}
\xi (b_{02})&=& 392976b_{02}+22959b_{02}^3-5\sqrt{1038382336b_{02}^2+819584160b_{02}^4+916941b_{02}^6},\\[0,3cm] 
\zeta(b_{02})&=& 392976b_{02}+22959b_{02}^3+5\sqrt{1038382336b_{02}^2+819584160b_{02}^4+916941b_{02}^6}.
\end{array}
$$
We can now write the following Lyapunov quantities by
$$\begin{array}{rcl}
L(7)&=&b_{02}(b_{06}-\xi)(b_{06}-\zeta),\\[0,3cm]
L(8)&=&F(b_{02},b_{06}),
\end{array}
$$
where $F:A\subset \mathbb{R}^2 \to \mathbb R$ and $A$ is an open set. On the curve $b_{06}=\xi(b_{02})$, we have that $L(7)=0$. The implicit function theorem implies the existence of a point $P=(b_{02}^{\ast},b_{06}^{\ast})$ on the graph of $\xi$ on the $(b_{02},b_{06})$-parameter plane, and a neighborhood $\mathcal U$ of $P$ where it is possible to define a real function $\gamma$ satisfying $b_{06}=\gamma(b_{02})$, with $\gamma'>0, \; \gamma''<0$ and with $F(b_{02},\gamma(b_{02}))=0$, such that $L(9)>0$ in $\mathcal U$. Similarly, on the curve $b_{06}=\zeta(b_{02})$, $L(7)$ also vanishes and in a similar way we can use the implicit function theorem to find a point $Q(\tilde b_{02},\tilde b_{06})$ on the graph of $\zeta$ and a neighborhood $\mathcal V$ of $Q$ when it is possible to define a curve $b_{06}=\phi(b_{02})$ with $\phi'>0,\; \phi''>0$ y and with $F(b_{02},\phi(b_{02}))=0$, such that $L(9)< 0$ in $\mathcal V$; we omit the exact expressions of $L(8)$ and $L(9)$ due to their extension. From here, at least 9 limit cycles can bifurcate from the origin.\\
\end{example}

Note that the Lyapunov quantities $L(0),L(1)$ and $L(2)$ appear in the coefficients of the first Melnikvo funciton $M_1(h)$. This suggests a relation between the small- and large-amplitude limit cycles of system \eqref{eq:ejem}. In fact, this also suggests that the limit cycles obtained perturbing the Lyapunov quatities are related with the ones guaranteed by theorem \ref{theo:teoHan}, but there are more limit cycles that are not controled by the first Melnikov function. In this case the condition \eqref{eq:Handet} over the determinant is not satisfied. Example \ref{ex:ejem} provides a new system in which the number of limit cycles that bifurcate from a weak focus at the origin is larger than the number of limit cycles that can be obtained as simple zeros of the first Melnikov function. The fact that the Lyapunov quantities appear in the coefficients of the first Melnikov function gives a hint of that this relation may imply that some of the limit cycles obtained by the two different methods (perturbation from an equilibrium point vs perturbation from a family of periodic orbits) can coincide. The actual nature of this relation and a complete caracterization remains an interesting challenge for future research.\\

\section{Aknowledgements }

The author would like to express his gratitude to Iv\'an Sz\'ant\'o for his guidance, support and patience.
\bibliographystyle{amsplain}

\end{document}